\numberwithin{equation}{section}
\newtheorem{Theorem}{Theorem}[section]
\newtheorem{Lemma}[Theorem]{Lemma}
\newtheorem{Proposition}[Theorem]{Proposition}
 { \theoremstyle{definition}
\newtheorem{Remark}[Theorem]{Remark} }
\newcommand{\bk}{\backslash}
\newcommand{\pa}{\partial}
\newcommand{\Te}{Teichm\"{u}ller\:}
\newcommand{\ov}{\overline}
\newcommand{\vep}{\varepsilon}
\newcommand{\di}{\displaystyle}
\begin{document}
\allowdisplaybreaks

\newcommand{\arXivNumber}{2105.11074}

\renewcommand{\thefootnote}{}

\renewcommand{\PaperNumber}{097}

\FirstPageHeading

\ShortArticleName{Liouville Action for Harmonic Diffeomorphisms}

\ArticleName{Liouville Action for Harmonic Diffeomorphisms\footnote{This paper is a~contribution to the Special Issue on Mathematics of Integrable Systems: Classical and Quantum in honor of Leon Takhtajan.

~~\,The full collection is available at \href{https://www.emis.de/journals/SIGMA/Takhtajan.html}{https://www.emis.de/journals/SIGMA/Takhtajan.html}}}

\Author{Jinsung PARK}

\AuthorNameForHeading{J.~Park}

\Address{School of Mathematics, Korea Institute for Advanced Study,\\ 207-43, Hoegiro 85, Dong-daemun-gu, Seoul, 130-722, Korea}
\Email{\href{mailto:jinsung@kias.re.kr}{jinsung@kias.re.kr}}
\URLaddress{\url{http://newton.kias.re.kr/~jinsung/home.html}}

\ArticleDates{Received May 25, 2021, in final form October 27, 2021; Published online November 02, 2021}

\Abstract{In this paper, we introduce a Liouville action for a harmonic diffeomorphism from a compact Riemann surface to a compact hyperbolic Riemann surface of genus $g\ge 2$. We~derive the variational formula of this Liouville action for harmonic diffeomorphisms when the source Riemann surfaces vary with a fixed target Riemann surface.}

\Keywords{quasi-Fuchsian group; Teichm\"uller space; Liouville action; harmonic diffeomorphism}

\Classification{14H60; 32G15; 53C43; 58E20}

\begin{flushright}
\begin{minipage}{65mm}
\it Dedicated to Professor Leon Takhtajan\\ on the occasion of his 70th birthday
\end{minipage}
\end{flushright}

\renewcommand{\thefootnote}{\arabic{footnote}}
\setcounter{footnote}{0}

\section{Introduction}

In mathematical physics, the Liouville action has been used as the action functional for the Liouville conformal field theory. In~mathematics, this was constructed in the works of Takhtajan--Zograf \cite{TZ88-1, TZ88-2}. They also proved several fundamental results of the Liouville action using the Teichm\"uller theory developed by Ahlfors--Bers. One of main results in \cite{TZ88-1, TZ88-2} is that the Liouville action is a K\"ahler potential of the Weil--Petersson symplectic 2-form on Teichm\"uller space.

One novelty of the works \cite{TZ88-1, TZ88-2} in the construction of the Liouville action
is the use of the projective structures on the Riemann surface. The projective structures determined by the geometric uniformizations in \cite{2, TZ88-1, TZ88-2} define the bounding noncompact hyperbolic 3-manifolds determined by those uniformizations. In~this geometric situation, the Liouville actions were proved to be the same as the renormalized volumes of the bounding hyperbolic 3-manifolds. We~refer to
\cite{Kras00, Kras-Sch08, 1, PT17, 2} for the relation of the Liouville action with the renormalized volume.

The harmonic map theory has been one of the main tools in the study of Teichm\"uller space.
The basic fact of this approach is that
there exists a unique harmonic diffeomorphism for the hyperbolic metric on the target Riemann surface in the homotopy class of an arbitrary homeomorphism between two compact Riemann surfaces.
Given a harmonic map, there is an associated Hopf differential, which is a holomorphic quadratic differential
on the source Riemann surface. In~\cite{Sam,W89}, it was proved that the Hopf differentials of harmonic diffeomorphisms give a natural parametrization of \Te space
fixing the source Riemann surface. Another asso\-ciated object to a harmonic map is its energy,
which can be considered as a functional on \Te space varying one of the source or target (hyperbolic) Riemann surface and fixing the other.

A modest motivation of this paper was to relate two objects~-- the Liouville action and the energy of harmonic diffeomorphisms~-- so that we may
have a certain object sharing the interesting properties of these two objects. To explain our approach to this problem, let us recall the construction of the Liouville action by Takhtajan--Zograf. In~\cite{TZ88-1,TZ88-2}, they crucially used a map
denoted by $J$ from the Poincar\'e half plane to the region of discontinuity for a~Kleinian group determined by a geometric uniformization. Then the main ingredient in the definition of the Liouville action is given by the pullback of the Poincar\'e metric by $J^{-1}$.

A possible generalization of the Liouville action could be achieved by using other map instead of $J^{-1}$. In~this paper, we develop this idea using a harmonic diffeomorphism which is canonically associated to the quasi-Fuchsian uniformization
for two marked compact hyperbolic Riemann surfaces.
By our construction, the Liouville action for a harmonic diffeomorphism contains the holomorphic energy of the harmonic map as a part.

As a first step to this study, we derive a variational formula for the Liouville action for harmonic diffeomorphisms. In~this formula, the variation of the Liouville action for harmonic diffeomorphisms is described mainly in terms of the Schwarzian derivative and the Hopf differential of harmonic diffeomorphisms. The precise variational formula is given in Theorem~\ref{t:final-thm}.
A~main part of the proof of this theorem is based on the variational formula of the Liouville action for a smooth family of conformal metrics on Riemann surfaces, which generalizes the work of Takhtajan--Teo in~\cite{2}.

Our approach in this paper may raise several related questions. One of them is a possibility to obtain another Liouville action for harmonic diffeomorphisms modifying the construction of this paper. We~take the term given by the holomorphic energy density in the pullback of the hyperbolic metric on the target Riemann surface by a harmonic diffeomorphism. But, we can also take
the Hopf differential part possibly among the parts of the pullback metric instead of our choice in this paper.
It is interesting to see how this different definition would provide us with a useful functional on \Te space.
See Remark~\ref{r:def-with-Hopf} for more detailed remark on this case.

Another natural question is the second variation of the Liouville action for diffeomorphisms. The second variation of the Liouville action defined by Takhtajan--Zograf in \cite{TZ88-1, TZ88-2} gives
the Weil--Petersson symplectic 2-form on Teichm\"uller space. On the other hand, the energy functional of harmonic diffeomorphisms varying the source Riemann surfaces with a fixed target hyperbolic Riemann surface is a strictly plurisubharmonic function on \Te space. This follows from that the Levi form given by the second variation of the energy functional is positive definite. For these, we refer to Tromba's book~\cite{Tromba}. Hence, as a common feature of the Liouville action and the energy functional of harmonic diffeomorphisms, one may wonder whether the Liouville action for diffeomorphisms would be a strictly plurisubharmonic function on Teich\-m\"iller space. The second variation of the Liouville action for harmonic diffeomorphisms and its possible applications will be studied elsewhere.

Finally let us explain the structure of this paper. We~start with the basic definitions and ter\-minologies for the Liouville action in Section~\ref{sec2}. This is a quick review of~\cite[Section~2]{2}. In~Section~\ref{s:harmonic}, we present the basics of the Liouville action for harmonic diffeomorphisms and derive its variational formula. In~Section~\ref{s:variation}, we prove the variational formula of the Liouville action for a smooth family of conformal metrics following~\cite[Section~4]{2}.

\section{Liouville action for quasi-Fuchsian groups}\label{sec2}

\looseness=1
Let us consider a compact Riemann surface $X$ with genus $g\geq 2$. Then the Riemann surface~$X$ can be realized by the \emph{Fuchsian uniformization}. It means that $X$ is given by
a quotient space~$\Gamma\bk\mathbb{U}$, where $\mathbb{U}$ is the upper half plane and $\Gamma$ is a marked, normalized Fuchsian group of the first kind. Here $\Gamma$ is a finitely generated cocompact discrete subgroup of $\text{PSL} (2, \mathbb{R})$ which has a~standard representation with $2g$ hyperbolic generators $\alpha_1, \beta_1, \ldots, \alpha_g, \beta_g$ satisfying the relation
\begin{gather*}
\alpha_1\beta_1\alpha_1^{-1}\beta_1^{-1}\cdots \alpha_g\beta_g\alpha_g^{-1}\beta_g^{-1}=I,
\end{gather*}
where $I$ is the identity element in $\Gamma$.

On the other hand, $X$ can be realized by the \emph{quasi-Fuchsian uniformization}. It means that~$X$ is given by a quotient space by a marked, normalized quasi-Fuchsian group $\Gamma \subset \text{PSL}(2,\mathbb{C})$.
This group has its region of discontinuity $\Omega\subset \hat{\mathbb{C}}$, which has two invariant components~$\Omega_1$ and~$\Omega_2$ separated by a quasi-circle $\mathcal{C}$. There exists a quasi-conformal homeomorphism $J_1$ of $\hat{\mathbb{C}}$ such that

\begin{enumerate}
\item[\textbf{QF1}]$J_1$ is holomorphic on $\mathbb{U}$ and $J_1(\mathbb{U})=\Omega_1$, $J_1(\mathbb{L})=\Omega_2$, $J_1(\mathbb{R})=\mathcal{C}$, where $\mathbb{U}$ and $\mathbb{L}$ are respectively the upper and lower half planes.
\item[\textbf{QF2}] $ \Gamma_1=J_1^{-1}\circ \Gamma\circ J_1$ is a marked, normalized Fuchsian group.
\end{enumerate}
 Since $J_1$ is holomorphic on $\mathbb{U}$, the Riemann surface $X=\Gamma_1\backslash \mathbb{U}$ is biholomorphic to the one given by the quasi-Fuchsian uniformization $\Gamma\backslash \Omega_1$. There is also a quasi-conformal homeomorphism~$J_2$ of $\hat{\mathbb{C}}$, holomorphic on $\mathbb{L}$ with a Fuchsian group $\Gamma_2=J_2^{-1}\circ \Gamma\circ J_2$ so that $\Gamma_2\backslash \mathbb{L}$ has the quasi-Fuchsian uniformization by $\Gamma\backslash\Omega_2$.

Let $\mathcal{A}^{-1,1}(\Gamma)$ be the space of Beltrami differentials for a quasi-Fuchsian group $\Gamma$, which is the Banach space of $\mu\in L^\infty(\mathbb{C})$ satisfying
\begin{gather*}
\mu(\gamma(z)) \frac{\overline{\gamma'(z)}}{\gamma'(z)} =\mu(z) \qquad \text{for all}\quad \gamma\in\Gamma
\end{gather*}
and
\begin{gather*}
\mu\, \big|_{\mathcal{C}} =0.
\end{gather*}
Denote by $\mathcal{B}^{-1,1}(\Gamma)$ the open unit ball in $\mathcal{A}^{-1,1}(\Gamma)$ with respect to the $\|\cdot\|_{\infty}$ norm. For each Beltrami coefficient
$\mu\in\mathcal{B}^{-1,1}(\Gamma)$, there exists a unique quasi-conformal map $f^\mu\colon\hat{\mathbb{C}} \to \hat{\mathbb{C}}$ satisfying the Beltrami equation
\begin{gather}\label{e:beltrami-eq}
f^\mu_{\bar{z}} =\mu f^\mu_z
\end{gather}
and fixing the points $0$, $1$ and $\infty$. Set $\Gamma^\mu= f^\mu \circ \Gamma\circ (f^\mu)^{-1}$ and define the \emph{deformation space of quasi-Fuchsian group} by
\begin{gather*}
\mathfrak{D}(\Gamma) =\mathcal{B}^{-1,1}(\Gamma)/ {\sim},
\end{gather*}
where $\mu\sim \nu$ if and only if $f^\mu=f^\nu$ on $\mathcal{C}$. The space $\mathfrak{D}(\Gamma)$ is a complex manifold of dimension $6g-6$. It is known that
\begin{gather*}
\mathfrak{D}(\Gamma) \simeq \mathfrak{T}(\Gamma_1)\times {\mathfrak{T}}(\Gamma_2),
\end{gather*}
where $\mathfrak{T}(\Gamma_i)$ is the \Te space of $\Gamma_i$ for $i=1,2$.
The deformation space $\mathfrak{D}(\Gamma,\Omega_1)$ is defined using the Beltrami coefficients supported on $\Omega_1$. By definition,
the space $\mathfrak{D}(\Gamma,\Omega_1)$ parametrizes all deformations of $X=\Gamma\backslash \Omega_1$ with the fixed Riemann surface $\Gamma\backslash \Omega_2$ so that
\begin{gather*}
\mathfrak{D}(\Gamma,\Omega_1) \simeq \mathfrak{T}(\Gamma_1).
\end{gather*}
{\sloppy Hence, it is possible to use the deformation space $\mathfrak{D}(\Gamma,\Omega_1)$ as the model of the Teichm\"{u}ller space~$\mathfrak{T}(\Gamma_1)$.
An advantage of this model is that one can use the holomorphic variation on $\mathfrak{D}(\Gamma,\Omega_1)$ given by the quasi-Fuchsian deformations.

}

In the following two subsections, we review the construction of the Liouville action for quasi-Fuchsian groups in~\cite{2}. We~refer to \cite[Section~2]{2} for more details.

\subsection{Homology construction}

Starting with a marked, normalized Fuchsian group $\Gamma$, the double homology complex $\mathsf{K}_{\bullet, \bullet}$ is defined as $\mathsf{S}_{\bullet}\otimes_{\mathbb{Z}\Gamma}\mathsf{B}_{\bullet}$, a tensor product over the integral group ring $\mathbb{Z}\Gamma$, where $\mathsf{S}_{\bullet}=\mathsf{S}_{\bullet}(\mathbb{U})$ is the singular chain complex of $\mathbb{U}$ with the differential $\partial'$, considered as a right $\mathbb{Z}\Gamma$-module, and $\mathsf{B}_{\bullet}=\mathsf{B}_{\bullet}(\mathbb{Z}\Gamma)$ is the standard bar resolution complex for $\Gamma$ with differential $\partial''$. The associated total complex $\text{Tot} \;\mathsf{K}$ is equipped with the total differential $\partial=\partial'+(-1)^p \partial''$ on $\mathsf{K}_{p,q}$.

There is a standard choice of the fundamental domain
$F\subseteq \mathbb{U}$ for $\Gamma$ as a non-Euclidean polygon with
$4g$ edges labeled by $a_k$, $a_k'$, $b_k'$, $b_k$; $1\leq k\leq g$ satisfying
$\alpha_k(a_k')=a_k$, $\beta_k(b_k')=b_k$, $1\leq k\leq g$. The orientation of the edges
is chosen such that
\begin{gather}\label{eq1}
\pa' F=\sum_{k=1}^{g}(a_k+b_k'-a_k'-b_k).
\end{gather}
Set $\pa' a_k=a_k(1)-a_k(0)$, $\pa' b_k=b_k(1)-b_k(0)$, so that $a_k(0)=b_{k-1}(0)$, $2\leq k \leq g$.

According to the isomorphism
$\mathsf{S}_{\bullet}\simeq\mathsf{K}_{\bullet,0}$, the fundamental domain $F$ is
identified with $F \otimes [\;] \allowbreak\in \mathsf{K}_{2,0}$. We~have $\pa'' F
= 0$, and it follows from \eqref{eq1} that
\begin{gather*}
\pa' F = \sum_{k=1}^g \big(\beta_k^{-1}(b_k) - b_k -\alpha_k^{-1}(a_k) +a_k\big) =\partial'' L,
\end{gather*}
where $L\in \mathsf{K}_{1,1}$ is given by
\begin{gather*} 
L= \sum_{k=1}^g \big(b_k\otimes[\beta_k] -a_k\otimes[\alpha_k]\big).
\end{gather*}
There exists $V\in\mathsf{K}_{0,2}$ such that
\begin{gather*}
 \pa ' L=\pa'' V.
\end{gather*}
 One can verify that it is given by
\begin{align*} 
V= {}&\sum_{k=1}^g \big(a_k(0)\otimes[ \alpha_k| \beta_k] - b_k(0)\otimes[\beta_k|\alpha_k] +
b_k(0)\otimes\big[\gamma_k^{-1}|\alpha_k\beta_k\big]\big)
\\
& -\sum_{k=1}^{g-1}
b_g(0)\otimes\big[\gamma_g^{-1}\cdots\gamma_{k+1}^{-1}|\gamma_k^{-1}\big],
\end{align*}
where $\gamma_k=[\alpha_k, \beta_k]=\alpha_k\beta_k\alpha_k^{-1}\beta_k^{-1}$.
Define
\begin{gather*}
\Sigma = F + L - V.
\end{gather*}
Then
\begin{gather*}
\pa \Sigma = 0.
\end{gather*}

Finally, we also define $W$ in the following way. Let $P_k$ be a $\Gamma$-contracting path (see~\cite[Definition~2.3]{2} for the precise definition of $\Gamma$-contracting) connecting 0 to $b_k(0)$. Then
\begin{gather*} 
W = \sum_{k=1}^g\! \big(P_{k-1}\otimes[\alpha_k|\beta_k] -
P_k\otimes[\beta_k|\alpha_k] +P_k\otimes[\gamma_k^{-1}|\alpha_k\beta_k]\big)
-\sum_{k=1}^{g-1}\!
P_g\otimes\big[\gamma_g^{-1}\cdots\gamma_{k+1}^{-1}| \gamma_k^{-1}\big].
\end{gather*}

If $\Gamma$ is a quasi-Fuchsian group, let $\Gamma_1$ be the Fuchsian group such that $\Gamma_1=J_1^{-1}\circ \Gamma\circ J_1$. The double complex associated with $\Omega_1$ and the group $\Gamma$ is a push-forward by the map $J_1$ of the double complex associated with $\mathbb{U}$ and the group $\Gamma_1$. Define
\begin{gather*}
\Sigma_1 =F_1 +L_1 -V_1,
\end{gather*}
where $F_1=J_1(F)$, $L_1=J_1(L)$, $V_1=J_1(V)$. Similarly we define
\begin{gather*}
\Sigma_2 =F_2 +L_2 -V_2.
\end{gather*}
Here $F_2=J_1(F')$, $L_2=J_1(L')$, $V_2=J_1(V')$, where the corresponding chains $F'$, $L'$, $V'$ in $\mathbb{L}$ are given by the complex conjugation of $F$, $L$, $V$ respectively.

\subsection{Cohomology construction}

The corresponding double complex in cohomology $\mathsf{C}^{\bullet, \bullet}$ is defined as $\mathsf{C}^{p, q}=\text{Hom}_{\mathbb{C}}\left(\mathsf{B}_q, \mathsf{A}^p\right)$, where~$\mathsf{A}^{\bullet}$ is the complexified de Rham complex on $\Omega_1$. The associated total complex $\text{Tot} \mathsf{C}$ is equipped with the total differential $D={\rm d}+(-1)^p\delta$ on $\mathsf{C}^{p, q}$, where ${\rm d}$ is the de Rham differential and $\delta$ is the group coboundary. The natural pairing $\langle \, , \, \rangle$ between $\mathsf{C}^{p, q}$ and $\mathsf{K}_{p, q}$ is given by the integration over chains.

Denote by $\mathcal{CM}(\Gamma\backslash \Omega )$ the space of conformal metrics on $\Gamma\backslash\Omega$. That is, every ${\rm d}s^2\in \mathcal{CM}(\Gamma\backslash \Omega)$ is represented as ${\rm d}s^2={\rm e}^{\phi(z)}|{\rm d}z|^2$, where $\phi$ is a smooth function on $\Omega$ satisfying
\begin{gather}\label{e:phi-behave}
\phi\circ\gamma+\log|\gamma'|^2=\phi\quad\quad\forall \;\gamma\in\Gamma.
\end{gather}

 The Liouville action is a function on the space of conformal metrics. Its construction is as follows.
Starting with the 2-form
\begin{gather*} 
\omega[\phi] = \big(|\phi_{z}|^2 +{\rm e}^{\phi}\big){\rm d}z\wedge {\rm d}\bar{z}\in \mathsf{C}^{2,0},
\end{gather*}
we have
\begin{gather*}
\delta\omega[\phi]={\rm d}\check{\theta}[\phi],
\end{gather*}
where $\check{\theta}[\phi] \in \mathsf{C}^{1,1}$ is given explicitly by
\begin{gather*} 
\check{\theta}_{\gamma^{-1}}[\phi]
= \bigg(\phi -\frac{1}{2}\log|\gamma'|^2-2\log 2-\log|c(\gamma)|^2\bigg)
\bigg(\frac{\gamma''}{\gamma'} {\rm d}z -\frac{\ov{\gamma''}}{\ov{\gamma'}}{\rm d}\bar{z}\bigg).
\end{gather*}
Here $c(\gamma)$ is the element $c$ in the linear fractional transformation $\di\gamma=\left(\begin{smallmatrix} a & b\\c & d\end{smallmatrix}\right)\!$. Notice that \mbox{$\check{\theta}_{\gamma^{-1}}[\phi]=0$} if $c(\gamma)=0$.

Next, set
\begin{gather*}
\check{u}=\delta \check{\theta}[\phi] \in \mathsf{C}^{1,2}.
\end{gather*}
From the definition of $\check{\theta}$ and $\delta^2=0$, it follows that the
1-form $\check{u}$ is closed. An explicit calculation gives
\begin{align*} \check{u}_{\gamma_1^{-1},\gamma_2^{-1}}= & -\bigg(\frac{1}{2}\log|\gamma_1'|^2+\log\frac{|c(\gamma_2)|^2}{|c(\gamma_2\gamma_1)|^2}\bigg)
\bigg(\frac{\gamma_2''}{\gamma_2'}\circ\gamma_1\, \gamma_1'\, {\rm d}z -
\frac{\ov{\gamma_2''}}{\ov{\gamma_2'}}\circ\gamma_1\, \ov{\gamma_1'}\,{\rm d}\bar{z}\bigg)
\\
 & + \bigg(\frac{1}{2}\log|\gamma_2'\circ\gamma_1|^2+\log\frac{|c(\gamma_2\gamma_1)|^2}{|c(\gamma_1)|^2}\bigg) \bigg(\frac{\gamma_1''}{\gamma_1'} {\rm d}z -
\frac{\ov{\gamma_1''}}{\ov{\gamma_1'}}{\rm d}\bar{z}\bigg).
\end{align*}

For ${\rm e}^{\phi(z)}|{\rm d}z|^2\in \mathcal{CM}(\Gamma\backslash \Omega)$, the Liouville action is defined as
\begin{gather*}
S[\phi]=\frac{\rm i}{2}\bigl(\langle \omega[\phi], F_1-F_2 \rangle-\big\langle\check{\theta}[\phi], L_1-L_2\big\rangle+\big\langle \check{u}, W_1-W_2\big\rangle\bigr).
\end{gather*}
Here $W_1=J_1(W)$ and $W_2=J_1(W')$ for the chain $W'$ in $\mathbb{L}$ given by the complex conjugation of~$W$. We~also define
\begin{gather}\label{e:def-check-S}
\check{S}[\phi]=\frac{\rm i}{2}\bigl(\big\langle \check{\omega}[\phi], F_1-F_2\big\rangle-\big\langle\check{\theta}[\phi], L_1-L_2\big\rangle+\langle \check{u}, W_1-W_2\rangle\bigr),
\end{gather}
where
\begin{gather*}
\check{\omega}[\phi] = |\phi_{z}|^2 {\rm d}z\wedge {\rm d}\bar{z}\in \mathsf{C}^{2,0}.
\end{gather*}
Note that $\delta \omega[\phi] =\delta \check{\omega}[\phi]$ since ${\rm e}^\phi \,{\rm d}z\wedge {\rm d}\bar{z}$ is $\Gamma$-invariant.

\section{Liouville action for harmonic diffeomorphisms}\label{s:harmonic}

For compact Riemann surfaces $X$ and $Y$ with genus $g\geq 2$, let $h\colon X\to Y$ denote a harmonic map for the hyperbolic metric ${\rm e}^{\psi(u)} |{\rm d}u|^2$ on $Y$, where $u$ denotes a conformal coordinate on $Y$.
The harmonic condition for $h$ is given by
\begin{gather}\label{e:harmonic-condition}
h_{z\bar{z}}+\big(\psi_u\circ h \big) h_z h_{\bar{z}}=0
\end{gather}
for a conformal coordinate $z$ on $X$.
Note that this condition depends on the conformal structure on $X$ and the metric structure on $Y$.
The pullback metric $h^*\big( {\rm e}^{\psi(u)} |{\rm d}u|^2\big)$ on $X$ has the following expression
\begin{gather}\label{e:pullback-metric}
h^*\big( {\rm e}^{\psi(u)} |{\rm d}u|^2\big) = {\rm e}^{\psi\circ h} h_z \bar{h}_z {\rm d}z^2 + {\rm e}^{\psi\circ h} h_z \bar{h}_{\bar{z}} |{\rm d}z|^2 + {\rm e}^{\psi\circ h} h_{\bar{z}} \bar{h}_z |{\rm d}z|^2 + {\rm e}^{\psi\circ h} h_{\bar{z}} \bar{h}_{\bar{z}} {\rm d}\bar{z}^2.
\end{gather}

We denote the $(2,0)$-component of $h^*\big( {\rm e}^{\psi(u)} |{\rm d}u|^2\big)$ by
\begin{gather*}
\Phi(h)={\rm e}^{\psi\circ h} h_z \bar{h}_z,
\end{gather*}
which is called a \emph{Hopf differential} of $h$. By the harmonicity condition \eqref{e:harmonic-condition}, one can easily check that $\Phi(h)$ is a holomorphic quadratic differential on $X$. We~also put
\begin{gather*}
{\rm e}^{\phi}:= {\rm e}^{\psi\circ h} h_z \bar{h}_{\bar{z}}.
\end{gather*}
By~\cite[Theorem 3.10.1 and Corollary 3.10.1]{J}, we have

\begin{Proposition}\label{p:property-har}
For a compact Riemann surface $X$ and a compact hyperbolic Riemann surface~$Y$ with same genus $g\ge 2$ and a continuous map $g\colon X\to Y$ of degree $1$,
there is a unique harmonic diffeomorphism $h\colon X\to Y$ in the homotopy class of $g$. In~this case,
${\rm e}^{\phi}= {\rm e}^{\psi\circ h} h_z \bar{h}_{\bar{z}}$ never vanishes on $X$, where ${\rm e}^{\psi(u)} |{\rm d}u|^2$ is the hyperbolic metric on $Y$.
\end{Proposition}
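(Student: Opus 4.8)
The plan is to combine the classical existence--uniqueness theory for harmonic maps into nonpositively curved targets with the argument of Sampson and Schoen--Yau that a degree-one harmonic map between closed hyperbolic surfaces is a diffeomorphism; the non-vanishing of ${\rm e}^{\phi}$ will fall out as a by-product. For existence of a harmonic $h$ homotopic to $g$ I would run the Eells--Sampson heat flow: since $Y$ carries the hyperbolic metric ${\rm e}^{\psi(u)}|{\rm d}u|^2$ of curvature $-1\le 0$, the flow starting at (a smoothing of) $g$ exists for all time and subconverges to a harmonic map in the homotopy class of $g$. For uniqueness within that class I would invoke Hartman's theorem; the usual ambiguity, a family translating along parallel geodesics, is excluded because $\deg g=1$ forces the image of $h$ to be all of $Y$, not a closed geodesic. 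Since harmonicity depends only on the conformal structure of $X$, I would then equip $X$ with its hyperbolic metric $\lambda|{\rm d}z|^2$ and set $\mathcal H={\rm e}^{\phi}/\lambda\ge0$ and $\mathcal L={\rm e}^{\psi\circ h}|h_{\bar z}|^2/\lambda\ge0$, the holomorphic and antiholomorphic energy densities of $h$. Then ${\rm e}^{\phi}$ vanishes exactly where $\mathcal H$ does, the Jacobian of $h$ equals $\mathcal H-\mathcal L$, and a degree-one local diffeomorphism between closed surfaces is automatically a global diffeomorphism; so it suffices to prove $\mathcal H>0$ and $\mathcal H-\mathcal L>0$ everywhere.

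Two identities drive the argument: the pointwise relation $\mathcal H\,\mathcal L=|\Phi(h)|^2/\lambda^2$ (immediate from the definitions of $\Phi(h)$ and $\mathcal L$), and the degree formula $\int_X(\mathcal H-\mathcal L)\,{\rm d}A_X=\int_X h^*({\rm d}A_Y)=\deg(h)\cdot\mathrm{Area}(Y)$. First, differentiating the harmonic equation \eqref{e:harmonic-condition} gives $h_{z\bar z}=-(\psi_u\circ h)h_z h_{\bar z}$, so $h_z$ solves a linear $\bar\partial$-equation with smooth coefficient; by the similarity principle $h_z$ is either identically zero, which is impossible since then $h$ would be antiholomorphic with $\deg h\le0$, or has only isolated zeros of finite orders $k_1,\dots,k_r$. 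Next, the Bochner identity for harmonic maps between surfaces of curvature $-1$ reads $\Delta\log\mathcal H=2\mathcal H-2\mathcal L-2$ away from the zeros of $h_z$, with an extra distributional term $\sum_i 4\pi k_i\lambda(z_i)^{-1}\delta_{z_i}$ globally; integrating over $X$ and feeding in the degree formula together with Gauss--Bonnet ($\mathrm{Area}(X)=\mathrm{Area}(Y)=2\pi(2g-2)$) yields $\sum_i k_i=(2g-2)(1-\deg h)$. For $\deg h=1$ this forces $r=0$, so $h_z$ is nowhere zero; equivalently ${\rm e}^{\phi}$ never vanishes, and $\mathcal H>0$ on all of $X$, which is the last assertion of the proposition.

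It remains to prove $\mathcal H-\mathcal L>0$. If $\Phi(h)\equiv0$ then $\mathcal L\equiv0$, so $h$ is holomorphic of degree one, hence a biholomorphism, and we are done. Otherwise $\Phi(h)$ has finitely many zeros; on their complement the function $w:=\log(\mathcal H/\mathcal L)$ is smooth, blows up to $+\infty$ at each zero of $\Phi(h)$ (there $\mathcal L=|\Phi(h)|^2/(\lambda^2\mathcal H)\to0$ while $\mathcal H$ stays away from $0$), and satisfies $\Delta w=4(\mathcal H-\mathcal L)=4\mathcal H(1-{\rm e}^{-w})$. Evaluating $\Delta w$ at an interior minimum of $w$ shows $\mathcal H\ge\mathcal L$ there, hence $w\ge0$ everywhere; then $\Delta w\le 4\mathcal H\,w$, so $w$ is a nonnegative supersolution of $\Delta-4\mathcal H$ with $4\mathcal H\ge0$, and the strong maximum principle upgrades $w\ge0$ to $w>0$ --- the alternative $w\equiv0$ would make $|\Phi(h)|^2$ a strictly positive multiple of $\mathcal H^2$, contradicting that $\Phi(h)$ has a zero. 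Therefore $\mathcal H-\mathcal L>0$ on $X$, $h$ is a local diffeomorphism, and hence a diffeomorphism.

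The conceptual skeleton above is routine; the steps where the real work lies are analytic: justifying via the similarity principle that $h_z$ has a finite zero set with zeros of finite order, correctly accounting for the distributional $\delta$-contributions when integrating the Bochner identity, and applying the strong maximum principle near the poles of $w$ at the zeros of the Hopf differential. These are precisely the ingredients packaged in \cite[Theorem~3.10.1 and Corollary~3.10.1]{J}, on which the proposition rests; the original treatments are Sampson's and Schoen--Yau's work on harmonic maps between surfaces.
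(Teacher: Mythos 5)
Your proposal is correct: existence via the Eells--Sampson flow, uniqueness via Hartman (the geodesic/constant exceptions ruled out by surjectivity of a degree-one map), the similarity-principle plus Bochner/degree count $\sum_i k_i=(2g-2)(1-\deg h)$ forcing $h_z\neq 0$ (hence ${\rm e}^{\phi}>0$), and the maximum-principle argument giving $\mathcal{H}-\mathcal{L}>0$ are exactly the classical Sampson/Schoen--Yau ingredients, and the computations you quote check out. The paper itself gives no proof of this proposition---it simply invokes \cite[Theorem~3.10.1 and Corollary~3.10.1]{J}---so your sketch is an accurate reconstruction of the standard argument packaged in that citation rather than a genuinely different route.
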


By Proposition \ref{p:property-har}, for a harmonic diffeomorphism $h\colon X\to Y$ for a hyperbolic metric on $Y$, ${\rm e}^\phi={\rm e}^{\psi\circ h} h_z \bar{h}_{\bar{z}}$ defines a metric on $X$.
Similarly $|\Phi(h)|$ defines a singular flat metric on $X$ since the holomorphic quadratic differential $\Phi(h)$ should have $-2\chi(X)$ number of zeros.

\begin{Proposition}For a harmonic diffeomorphism $h\colon X\to Y$ for a hyperbolic metric on $Y$,
\begin{gather}\label{e:Gauss-cur}
K_{\phi}:=-2\phi_{z\bar{z}} {\rm e}^{-\phi}= \frac{|h_{\bar{z}}|^2}{|h_z|^2}-1.
\end{gather}
In particular, $K_{\phi}$ never vanishes.
\end{Proposition}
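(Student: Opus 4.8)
The plan is to compute $\phi_{z\bar z}$ directly from the definition ${\rm e}^{\phi}={\rm e}^{\psi\circ h}h_z\bar h_{\bar z}$ and then read off $K_{\phi}$ from \eqref{e:Gauss-cur}. By Proposition~\ref{p:property-har} the function ${\rm e}^{\phi}$ never vanishes, so $h_z$ is nowhere zero and on a coordinate patch of $X$ one may write $\phi=\psi\circ h+\log h_z+\log\bar h_{\bar z}$ for suitable branches of the logarithms; this suffices, since only the globally defined function $\phi_{z\bar z}$ is wanted. The three ingredients of the computation are the harmonicity equation \eqref{e:harmonic-condition}, its complex conjugate $\bar h_{z\bar z}+(\psi_{\bar u}\circ h)\bar h_z\bar h_{\bar z}=0$, and the fact that the hyperbolic metric on $Y$ has Gaussian curvature $-1$, that is, $\psi_{u\bar u}=\frac{1}{2}{\rm e}^{\psi}$.

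First I would differentiate $\phi$ in $z$. The term $\partial_z\log\bar h_{\bar z}=\bar h_{z\bar z}/\bar h_{\bar z}$ reduces to $-(\psi_{\bar u}\circ h)\bar h_z$ by the conjugate harmonicity equation, and this cancels the $\bar h_z$-part of $\partial_z(\psi\circ h)=(\psi_u\circ h)h_z+(\psi_{\bar u}\circ h)\bar h_z$, so that $\phi_z=(\psi_u\circ h)h_z+h_{zz}/h_z$. Next I would differentiate this in $\bar z$: I would expand $\partial_{\bar z}\big[(\psi_u\circ h)h_z\big]$ and $\partial_{\bar z}\big[h_{zz}/h_z\big]$ and substitute \eqref{e:harmonic-condition} for $h_{z\bar z}$ wherever it occurs, including inside $h_{zz\bar z}=\partial_z h_{z\bar z}$. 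I expect all the terms involving $h_{zz}/h_z$, those involving $\psi_{uu}\circ h$, and those involving $(\psi_u\circ h)^2$ to cancel in pairs, leaving $\phi_{z\bar z}=(\psi_{u\bar u}\circ h)\big(h_z\bar h_{\bar z}-h_{\bar z}\bar h_z\big)=(\psi_{u\bar u}\circ h)\big(|h_z|^2-|h_{\bar z}|^2\big)$.

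Then substituting $\psi_{u\bar u}=\frac{1}{2}{\rm e}^{\psi}$ gives $\phi_{z\bar z}=\frac{1}{2}{\rm e}^{\psi\circ h}\big(|h_z|^2-|h_{\bar z}|^2\big)$, and dividing by ${\rm e}^{\phi}={\rm e}^{\psi\circ h}|h_z|^2$ produces $K_{\phi}=-2\phi_{z\bar z}{\rm e}^{-\phi}=|h_{\bar z}|^2/|h_z|^2-1$, which is \eqref{e:Gauss-cur}. For the final assertion I would use that $h$, being a harmonic diffeomorphism in the homotopy class of a degree-one map, is orientation preserving, so its Jacobian $|h_z|^2-|h_{\bar z}|^2$ is positive everywhere; hence $|h_{\bar z}|^2/|h_z|^2<1$, so $K_{\phi}<0$ and in particular $K_{\phi}$ never vanishes.

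The difficulty is bookkeeping rather than conceptual: the second differentiation generates a sizeable number of terms, including third derivatives of $h$ coming from $h_{zz\bar z}$, and the content of the statement is precisely that all of them cancel except the curvature term $(\psi_{u\bar u}\circ h)(|h_z|^2-|h_{\bar z}|^2)$. That such a cancellation must occur is forced by the fact that $\phi_{z\bar z}$ is the Gaussian curvature density of ${\rm e}^{\phi}|{\rm d}z|^2$ and so must be algebraic in $h$ and its first derivatives; but carrying it through cleanly requires care with the conjugation conventions, notably $\overline{h_{\bar z}}=\bar h_z$ and $\overline{\psi_u\circ h}=\psi_{\bar u}\circ h$, and with exactly where \eqref{e:harmonic-condition} and its conjugate are applied.
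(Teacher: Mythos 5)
Your proposal is correct and follows essentially the same route as the paper: you derive $\phi_z=(\psi_u\circ h)h_z+h_{zz}/h_z$ from the harmonicity equation, then compute $\phi_{z\bar z}=(\psi_{u\bar u}\circ h)\big(|h_z|^2-|h_{\bar z}|^2\big)$ (the cancellations you anticipate do occur), invoke the Liouville equation $\psi_{u\bar u}=\frac12{\rm e}^{\psi}$, and conclude non-vanishing of $K_\phi$ from the non-vanishing Jacobian of the diffeomorphism $h$, exactly as in the paper.
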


\begin{proof}By the equality \eqref{e:harmonic-condition},
\begin{gather}\label{e:phi-z}
\phi_z= (\psi_u\circ h)\,\displaystyle h_z +\frac{h_{zz}}{h_z},
\end{gather}
and
\begin{gather}\label{e:phi-z-bar-z}
\phi_{z\bar{z}}= (\psi_{u\bar{u}}\circ h)\, \big( |h_z|^2 -|h_{\bar{z}}|^2 \big) = \frac12 {\rm e}^{\psi\circ h} \big( |h_z|^2 -|h_{\bar{z}}|^2 \big).
\end{gather}
Here the second equality in \eqref{e:phi-z-bar-z} follows by the Liouville equation for ${\rm e}^{\psi}$,
\begin{gather}\label{e:liouv}
\psi_{u\bar{u}}=\frac12 {\rm e}^{\psi}.
\end{gather}
Hence we have
\begin{gather*}
K_{\phi}=-2\phi_{z\bar{z}} {\rm e}^{-\phi} = -{\rm e}^{\psi\circ h} \big( |h_z|^2 -|h_{\bar{z}}|^2 \big)\cdot {\rm e}^{-\psi\circ h} |h_z|^{-2} = \frac{|h_{\bar{z}}|^2}{|h_z|^2}-1.
\end{gather*}
For the harmonic diffeomorphism $h\colon X\to Y$, its Jacobian $J_h= |h_z|^2 - |h_{\bar{z}}|^2$ never vanishes. Hence $K_\phi$ never vanishes
by the above equality.
\end{proof}

\begin{Proposition}\label{p:energy-momentum}
For a harmonic diffeomorphism $h\colon X\to Y$ for a hyperbolic metric on $Y$,
 ${\rm e}^\phi$~satisfies the following equality:
\begin{gather*}
\phi_{zz}-\frac12 \phi_z^2 = \bigg(\bigg(\psi_{uu}-\frac12 \psi^2_u \bigg)\circ h\bigg) h_z^2+\frac12 \Phi(h) + \mathcal{S}(h) \qquad \text{on} \quad U,
\end{gather*}
 where $z$ is a conformal coordinate on an open set $U\subset X$ and $\mathcal{S}(h)= \frac{h_{zzz}}{h_z}-\frac 32 \big(\frac{h_{zz}}{h_z}\big)^2$.
\end{Proposition}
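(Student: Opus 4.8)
The plan is to compute $\phi_{zz} - \tfrac12 \phi_z^2$ directly from the expression for $\phi_z$ obtained in \eqref{e:phi-z}, namely $\phi_z = (\psi_u\circ h)\,h_z + h_{zz}/h_z$, and to organize the resulting terms into the three asserted pieces. First I would differentiate $\phi_z$ in $z$, applying the chain rule to $\psi_u\circ h$ (which produces $(\psi_{uu}\circ h)\,h_z^2 + (\psi_{u\bar u}\circ h)\, h_z h_{\bar z}$ from the first summand, plus $(\psi_u\circ h)\,h_{zz}$), and differentiating $h_{zz}/h_z$ to get $h_{zzz}/h_z - (h_{zz}/h_z)^2$. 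Then I would square $\phi_z$ to get $(\psi_u\circ h)^2 h_z^2 + 2(\psi_u\circ h)\,h_{zz} + (h_{zz}/h_z)^2$, and subtract half of it.

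Next I would collect terms. The $(\psi_{uu}\circ h)\,h_z^2$ and $-\tfrac12(\psi_u\circ h)^2 h_z^2$ terms combine into $\big((\psi_{uu}-\tfrac12\psi_u^2)\circ h\big)h_z^2$, which is the first claimed piece. The terms $(\psi_u\circ h)\,h_{zz}$ from $\phi_{zz}$ and $-\tfrac12\cdot 2(\psi_u\circ h)\,h_{zz}$ from $-\tfrac12\phi_z^2$ cancel exactly. The terms $h_{zzz}/h_z - (h_{zz}/h_z)^2$ from $\phi_{zz}$ combined with $-\tfrac12(h_{zz}/h_z)^2$ from $-\tfrac12\phi_z^2$ give $h_{zzz}/h_z - \tfrac32(h_{zz}/h_z)^2 = \mathcal{S}(h)$, the Schwarzian piece. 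What remains is the cross term $(\psi_{u\bar u}\circ h)\,h_z h_{\bar z}$; here I would invoke the Liouville equation \eqref{e:liouv}, $\psi_{u\bar u} = \tfrac12 {\rm e}^{\psi}$, to rewrite it as $\tfrac12 ({\rm e}^{\psi}\circ h)\,h_z h_{\bar z}$. The final step is to recognize $({\rm e}^{\psi}\circ h)\,h_z h_{\bar z}$ — wait, one must be careful: the Hopf differential is $\Phi(h) = {\rm e}^{\psi\circ h} h_z \bar h_z$, so I should instead note that $h$ being a map, $\bar h_z = \overline{h_{\bar z}}$ in general, but when $h$ is orientation-preserving one does not simply identify these; the cleaner route is that the cross term as written is $(\psi_{u\bar u}\circ h) h_z h_{\bar z}$ and I should track whether the intended object is really ${\rm e}^{\psi\circ h} h_z h_{\bar z}$. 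Comparing with the statement's $\tfrac12\Phi(h) = \tfrac12 {\rm e}^{\psi\circ h} h_z \bar h_z$, I would conclude the computation must produce exactly ${\rm e}^{\psi\circ h} h_z \bar h_z$ from the cross term, so I would double-check that the relevant second-order term arising from differentiating $(\psi_u\circ h) h_z$ is $(\psi_{u\bar u}\circ h) \bar h_z h_z$ (paying attention to which partial of $h$ enters via the antiholomorphic dependence), and then \eqref{e:liouv} converts it to $\tfrac12\Phi(h)$.

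The only genuinely delicate point is exactly this bookkeeping of holomorphic versus antiholomorphic derivatives in the chain rule for $\psi_u\circ h$: since $\psi$ depends on both $u$ and $\bar u$, the $z$-derivative of $\psi_u\circ h$ picks up both $(\psi_{uu}\circ h) h_z$ and $(\psi_{u\bar u}\circ h)(\partial_z \bar u\circ h)$, and one must correctly identify $\partial_z(\bar u\circ h)$ with $\bar h_z$ (the $z$-derivative of the antiholomorphic component of the map), which is consistent with the definition $\Phi(h) = {\rm e}^{\psi\circ h} h_z \bar h_z$ used throughout. Once that identification is made, everything collapses termwise and no integration or global argument is needed — this is a purely local computation on $U$, valid wherever the conformal coordinate $z$ is defined, so the proposition follows. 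I expect the main obstacle to be nothing more than this careful term-tracking; there is no analytic subtlety, and the three asserted summands fall out by grouping.
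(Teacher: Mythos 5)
Your proposal is correct and follows essentially the same route as the paper: differentiate $\phi_z=(\psi_u\circ h)h_z+h_{zz}/h_z$ in $z$ (the paper's \eqref{e:phi-z} and \eqref{e:phi-zz}), subtract $\tfrac12\phi_z^2$, and convert the cross term via the Liouville equation \eqref{e:liouv}. Your final resolution of the bookkeeping issue is the right one — the chain rule on $\psi_u\circ h$ produces $(\psi_{u\bar u}\circ h)\,\bar h_z h_z$ (with $\bar h_z=\partial_z\bar h=\overline{h_{\bar z}}$, matching the definition $\Phi(h)={\rm e}^{\psi\circ h}h_z\bar h_z$), so the grouping into the three asserted pieces goes through exactly as you describe.
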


\begin{proof}From \eqref{e:phi-z}, we have
\begin{gather}\label{e:phi-zz}
\phi_{zz}=(\psi_{uu}\circ h) h_z^2 +(\psi_{u\bar{u}}\circ h) h_z \bar{h}_z +(\psi_u\circ h) h_{zz} +\frac{h_{zzz}}{h_z} -\bigg(\frac{h_{zz}}{h_z}\bigg)^2.
\end{gather}
Then the claimed equality follows by \eqref{e:phi-z}, \eqref{e:liouv}, and \eqref{e:phi-zz}.
\end{proof}

For two marked compact Riemann surfaces of genus $g\ge2$, there exists a marked, normalized quasi-Fuchsian group $\Gamma$ such that $X=\Gamma\backslash \Omega_1$ and $\overline{Y}=\Gamma\backslash \Omega_2$ for a region of discontinuity
$\Omega_1\sqcup \Omega_2$ by Bers' simultaneous uniformization theorem. We~also assume that
$Y$ is realized by the Fuchsian uniformization with a Fuchsian group $\Gamma_Y$ acting on the upper half plane $\mathbb{U}$ such that $Y = \Gamma_Y\backslash \mathbb{U}$. By Proposition \ref{p:property-har}, for these marked compact Riemann surfaces $X$ and $Y$, there exists the unique harmonic diffeomorphism $h\colon X\to Y$ for the hyperbolic metric ${\rm e}^{\psi(u)}|{\rm d}u|^2$ on $Y$ such that $h$ maps the marking of $X$ to the marking of $Y$. This also induces a harmonic map from $\Omega_1$ to $\mathbb{U}$, denoted by the same notation $h$, such that for a given $\gamma\in\Gamma$, there is a~$\gamma_Y\in \Gamma_Y$ with
\begin{gather*}
h\circ \gamma =\gamma_Y \circ h.
\end{gather*}
Now we define a metric ${\rm e}^{\phi(z)}|{\rm d}z|^2$ on $\Omega_1\sqcup \Omega_2$ by the pullback of the hyperbolic metric ${\rm e}^{\psi(u)}|{\rm d}u|^2$ on $\mathbb{U}\sqcup \mathbb{L}$ by $h\colon \Omega_1\to\mathbb{U}$
and $J^{-1}_2\colon \Omega_2\to \mathbb{L}$ respectively.
More precisely we have
\begin{gather*}
{\rm e}^{\phi(z)}=\begin{cases}
{\rm e}^{\psi\circ h(z)} |h_z(z)|^2 &\text{for} \ z\in \Omega_1,
\\[.5ex]
 {\rm e}^{\psi\circ J_2^{-1}(z)} \big|\big(J_2^{-1}\big)_z(z)\big|^2 &\text{for} \ z\in \Omega_2.
 \end{cases}
\end{gather*}
Note that we take only the second part of the pullback metric given in \eqref{e:pullback-metric} for $z\in \Omega_1$ in the above definition of ${\rm e}^{\phi(z)}$. By the definition, it follows that ${\rm e}^{\phi(\gamma(z))} |\gamma_z|^2 = {\rm e}^{\phi(z)}$ for any $\gamma\in\Gamma$ as in~\eqref{e:phi-behave}.

Now the Liouville action for the harmonic diffeomorphism $h$ is defined by
\begin{gather*}
S[h]=\frac{\rm i}{2}\bigl(\langle {\omega}[\phi], F_1-F_2 \rangle-\big\langle\check{\theta}[\phi], L_1-L_2 \big\rangle+\big\langle \check{u}, W_1-W_2 \big\rangle\bigr)
\end{gather*}
and its modification is defined by
\begin{gather*}
\check{S}[h]=\frac{\rm i}{2}\big(\big\langle \check{\omega}[\phi], F_1-F_2 \big\rangle-\big\langle\check{\theta}[\phi], L_1-L_2 \big\rangle+\big\langle \check{u}, W_1-W_2 \big\rangle\big).
\end{gather*}
The holomorphic energy of $h$ is defined by
\begin{gather*}
E(h) = \int_{\Gamma\backslash \Omega_1} {\rm e}^{\phi} {\rm d}^2z = \int_{\Gamma\backslash\Omega_1} {\rm e}^{\psi\circ h} h_z \bar{h}_{\bar{z}}\, {\rm d}^2z.
\end{gather*}
From the definitions we have
\begin{gather*}
S[h]=\check{S}[h]+E(h)+ 2\pi (2g-2).
\end{gather*}

\begin{Remark}\label{r:def-with-Hopf}
In the above definition of the Liouville action for diffeomorphisms $S[h]$, one may use $|\Phi(h)|=\big|{\rm e}^{\psi\circ h(z)} h_z \bar{h}_z\big|$ instead of
${\rm e}^\phi={\rm e}^{\psi \circ h(z)} h_z \bar{h}_{\bar{z}}$. Since $|\Phi(h)|$ defines a singular flat metric on $X$, the corresponding term $\check{\omega}[\phi]$ defined by $|\Phi(h)|$
is singular where $|\Phi(h)|$ has a zero.
To~deal with these singularities, we need to regularize the integral $\langle \check{\omega}[\phi], F_1-F_2 \rangle$ at the singular points as in \cite{M-P}.
\end{Remark}

Given a harmonic Beltrami differential $\mu\in \mathcal{B}^{-1,1}(\Gamma)$,
let $f^\vep=f^{\vep\mu}$ be the unique quasi-conformal map satisfying \eqref{e:beltrami-eq} with the Beltrami differential $\vep\mu$.
Notice that $f^{\vep}$ varies holomorphically with respect to $\vep$ and thus
\begin{gather*}
\frac{\pa}{\pa\bar{\vep}} \bigg|_{\vep=0}f^{\vep}=0.
\end{gather*}
Let
\begin{gather*}
\dot{f}= \frac{\pa}{\pa\vep}\bigg|_{\vep=0}f^{\vep}.
\end{gather*}
It follows from the definition $f^{\vep}_{\bar{z}}=\vep\mu f^{\vep}_z$ that
\begin{gather*}
\dot{f}_{\bar{z}}=\mu.
\end{gather*}
For any linear fractional transformation $\gamma$, let
\begin{gather*}
\gamma^{\vep\mu}=f^{\vep\mu}\circ\gamma\circ (f^{\vep\mu})^{-1}.
\end{gather*}
Then $\gamma^{\vep\mu}$ varies holomorphically with respect to $\vep$.
The Lie derivative of the smooth family of~$(l, m)$ tensors $\omega^{\vep\mu}$ on $\mathfrak{D}(\Gamma^{\vep\mu})$ for $\Gamma^{\vep\mu}= f^{\vep\mu}\circ\Gamma\circ (f^{\vep\mu})^{-1}$ is defined as
\begin{gather*}
L_{\mu}\omega=\frac{\pa}{\pa\vep}\bigg|_{\vep=0}\omega^{\vep\mu}\circ f^{\vep\mu} (f^{\vep\mu}_z)^l\big(\overline{f^{\vep\mu}_z}\big)^m.
\end{gather*}

For a harmonic diffeomorphism $h\colon X\to Y$, we consider the situation of varying harmonic diffeomorphisms along a variation of $X$ with a fixed $Y$.
For this purpose, we put $\mathcal{B}^{-1,1}(\Gamma,\Omega_1)$ to be the subspace of
$\mathcal{B}^{-1,1}(\Gamma)$ consisting of $\mu\in \mathcal{B}^{-1,1}(\Gamma)$ whose support lies in $\Omega_1$. Hence, a Beltrami differential $\mu\in\mathcal{B}^{-1,1}(\Gamma,\Omega_1)$ represents an element in $\mathfrak{D}(\Gamma,\Omega_1)$.
Now we have the following commuting diagram with $f^\vep=f^{\vep\mu}$ for $\mu\in\mathcal{B}^{-1,1}(\Gamma,\Omega_1)$,
\begin{gather*}
\begin{CD}
 X @>f^\vep >> X^\vep \\
 @VV h V @VV h^\vep V \\
 Y @> g^\vep>> Y,
\end{CD}
\end{gather*}
where $X^\vep= f^\vep(X)$ and $g^\vep:= h^\vep\circ f^\vep\circ h^{-1}\colon Y\to Y$.

For the Lie derivative $L_\mu S[h]=L_\mu \check{S}[h]+ L_\mu E(h)$, first we consider $L_\mu E(h)$.

\begin{Theorem}\label{t:E-variation}
For a harmonic Beltrami differential $\mu\in \mathcal{B}^{-1,1}(\Gamma,\Omega_1)$,
\begin{gather*}
L_{\mu} E(h) =- \int_{\Gamma\backslash \Omega_1} \Phi(h) \mu\, {\rm d}^2z.
\end{gather*}
\end{Theorem}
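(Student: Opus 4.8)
The plan is to transport the problem to the fixed source surface $X$ along $f^\vep$, to show that the variation of the harmonic map itself does not enter the first variation of its energy, and then to differentiate an integral over a fixed domain. For the first step, using the commuting diagram I would rewrite $E(h^\vep)$, the holomorphic energy on $X^\vep=f^\vep(X)$ of the harmonic diffeomorphism $h^\vep\colon X^\vep\to Y$, by the change of variables $w=f^\vep(z)$: it equals the holomorphic energy of $\tilde h^\vep:=h^\vep\circ f^\vep\colon X\to Y$ computed with respect to the fixed target metric ${\rm e}^{\psi}|{\rm d}u|^2$ and the complex structure $c_\vep$ on $X$ pulled back from the standard one along $f^\vep$. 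Writing $E_{c_\vep}(v)$ for the holomorphic energy of a map $v\colon X\to Y$ with respect to $c_\vep$, and using $w_{\bar z}=\vep\mu\, w_z$, a short computation of the energy density and area element gives
\begin{gather*}
E_{c_\vep}(v)=\int_{\Gamma\bk\Omega_1}\frac{\bigl|v_z-\overline{\vep\mu}\,v_{\bar z}\bigr|^2}{1-|\vep\mu|^2}\,{\rm e}^{\psi\circ v}\,{\rm d}^2z ,
\end{gather*}
which at $\vep=0$ equals $\int_{\Gamma\bk\Omega_1}{\rm e}^{\psi\circ h}|h_z|^2\,{\rm d}^2z=\int_{\Gamma\bk\Omega_1}{\rm e}^{\phi}\,{\rm d}^2z=E(h)$. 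Hence $L_\mu E(h)=\frac{\pa}{\pa\vep}\big|_{\vep=0}E_{c_\vep}(\tilde h^\vep)$.

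For the second step I would show $\frac{\pa}{\pa\vep}\big|_{\vep=0}E_{c_\vep}(\tilde h^\vep)=\frac{\pa}{\pa\vep}\big|_{\vep=0}E_{c_\vep}(h)$, i.e.\ the variation of the harmonic map drops out. For degree $1$ maps in the fixed homotopy class the holomorphic energy $E_c$ and the total Dirichlet energy $\mathcal E_c$ differ by $E_c(v)-\frac12\mathcal E_c(v)=\frac12\int_X v^*(\text{area form of }Y)=\pi(2g-2)$, a constant independent both of $v$ in the homotopy class and of the complex structure $c$ on $X$ (Gauss--Bonnet, $\deg v=1$). Hence $\tilde h^\vep$, being harmonic for $c_\vep$ and so a minimizer of $\mathcal E_{c_\vep}$, is also a critical point of $v\mapsto E_{c_\vep}(v)$; since $\tilde h^\vep-h=O(|\vep|)$ by the standard smooth dependence of harmonic maps on parameters (see, e.g., \cite{J,Tromba}), one gets $E_{c_\vep}(h)=E_{c_\vep}(\tilde h^\vep)+O(|\vep|^2)$, hence the equality of first variations.

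For the last step I would differentiate the displayed formula at $\vep=0$ with $v=h$ frozen. The denominator $1-|\vep\mu|^2$ and the term $|\vep\mu|^2|h_{\bar z}|^2$ are $O(|\vep|^2)$, and $\frac{\pa}{\pa\vep}\overline{\vep\mu}=0$, so the only first-order contribution to the integrand is $-{\rm e}^{\psi\circ h}h_z\overline{h_{\bar z}}\,\vep\mu$; since ${\rm e}^{\psi\circ h}$ is independent of $\vep$,
\begin{gather*}
L_\mu E(h)=\frac{\pa}{\pa\vep}\bigg|_{\vep=0}E_{c_\vep}(h)=-\int_{\Gamma\bk\Omega_1}{\rm e}^{\psi\circ h}\,h_z\,\overline{h_{\bar z}}\,\mu\,{\rm d}^2z=-\int_{\Gamma\bk\Omega_1}\Phi(h)\,\mu\,{\rm d}^2z ,
\end{gather*}
where I used $\bar h_z=\overline{h_{\bar z}}$, so $\Phi(h)={\rm e}^{\psi\circ h}h_z\bar h_z={\rm e}^{\psi\circ h}h_z\overline{h_{\bar z}}$; the integrand $\Phi(h)\mu\,{\rm d}^2z$ is $\Gamma$-invariant since $\Phi(h)$ is a holomorphic quadratic differential and $\mu$ a Beltrami differential, so the integral is well defined on $\Gamma\bk\Omega_1$.

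I expect the main obstacle to be the second step, namely proving that the variation $\dot{\tilde{h}}$ of the harmonic map contributes nothing: this rests on the identification of the holomorphic energy with half the Dirichlet energy up to a homotopy-invariant constant (so that a harmonic map is also critical for the holomorphic energy, not just for the total energy) together with the smooth dependence of $\tilde h^\vep$ on $\vep$. Granting this, everything else is a routine differentiation.
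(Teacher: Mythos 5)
Your proposal is correct, and it reaches the result by a genuinely different organization than the paper, essentially Wolf's classical argument. The paper proves the theorem by brute force: it pulls the energy back by $f^\vep$, differentiates the composed integrand (introducing $\dot h$, $\dot{\bar h}$, $\dot f$), integrates by parts, and uses the harmonic map equation \eqref{e:harmonic-condition} to cancel every term involving the variation of the map, leaving exactly $-\int_{\Gamma\bk\Omega_1}{\rm e}^{\psi\circ h}h_z\bar h_z\dot f_{\bar z}\,{\rm d}^2z=-\int\Phi(h)\mu\,{\rm d}^2z$ since $\dot f_{\bar z}=\mu$. You instead (i) rewrite $E(h^\vep)$ as the holomorphic energy $E_{c_\vep}(\tilde h^\vep)$ on the fixed surface with the Beltrami-deformed structure (your explicit formula with $\bigl|v_z-\overline{\vep\mu}v_{\bar z}\bigr|^2/(1-|\vep\mu|^2)$ is correct, as one checks using $f^\vep_{\bar z}=\vep\mu f^\vep_z$), (ii) kill the contribution of $\dot{\tilde h}$ conceptually, via the identity $E_c=\tfrac12\mathcal E_c+\pi(2g-2)$ on the homotopy class (so harmonic maps are critical for $E_c$, not just $\mathcal E_c$) together with smooth dependence of $h^\vep$ on $\vep$, and (iii) differentiate the explicit formula in $\vep$ with the map frozen, where only the $-\overline{\vep\mu}$ cross term survives under $\partial/\partial\vep$. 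The underlying mechanism is the same in both proofs — criticality of the holomorphic energy in the map direction at a harmonic map — but the paper verifies it by an explicit integration by parts inside the computation (and indeed remarks after the theorem that this is the Euler--Lagrange statement, which it then reuses as \eqref{e:Ahlfors} in the proof of Theorem~\ref{t:final-thm}), whereas you derive it from the topological identity and an envelope-type argument; note your step $E_{c_\vep}(h)=E_{c_\vep}(\tilde h^\vep)+O(|\vep|^2)$ is cleanest if phrased via the chain rule for $F(\vep,s)=E_{c_\vep}(\tilde h^s)$, and the differentiability of $\vep\mapsto h^\vep$ that it requires is also implicitly assumed by the paper the moment it writes $\dot h$. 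Your route buys a transparent separation of the "structure variation" from the "map variation" and connects directly to Wolf's formula; the paper's route is self-contained and produces, as a byproduct, the pointwise identity needed later in Theorem~\ref{t:final-thm}.
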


\begin{proof}
By the definition of the Lie derivative $L_{\mu}$, we have the following equalities.
\begin{gather*}
\frac{\partial}{\partial\vep} \bigg|_{\vep=0} \big({\rm e}^{\psi\circ h^\vep\circ f^\vep} (h^\vep_z\circ f^\vep) \big( \bar{h}^\vep_{\bar{z}}\circ f^\vep \big) \, {\rm d}f^\vep\wedge {\rm d}\bar{f}^\vep \big)
\\ \qquad
{}= {\rm e}^{\psi\circ h} \big((\psi_u\circ h) \big(\dot{h}+h_z \dot{f} \big) +(\psi_{\bar{u}}\circ h) \big(\dot{\bar{h}} +\bar{h}_z \dot{f} \big)\big)h_z \bar{h}_{\bar{z}}\, {\rm d}z\wedge {\rm d}\bar{z}
\\ \qquad \phantom{=}
{}+ {\rm e}^{\psi\circ h} \big( \big(\dot{h}_z +h_{zz} \dot{f}\big) \bar{h}_{\bar{z}} + h_z \big(\dot{\bar{h}}_{\bar{z}} +\bar{h}_{\bar{z}z} \dot{f}\big)\big)\, {\rm d}z\wedge {\rm d}\bar{z}
+ {\rm e}^{\psi\circ h} h_z \bar{h}_{\bar{z}} \dot{f}_z \, {\rm d}z\wedge {\rm d}\bar{z}.
\end{gather*}
Hence,
\begin{align*}
L_\mu E(h) ={}& \int_ {\Gamma\backslash \Omega_1} {\rm e}^{\psi\circ h} \big((\psi_u\circ h) h_z \bar{h}_{\bar{z}}\big(\dot{h}+h_z \dot{f} \big) +\big(\dot{h} +h_{z} \dot{f}\big)_z \bar{h}_{\bar{z}} \big)\, {\rm d}^2z
\\
&+ \int_{\Gamma\backslash \Omega_1} {\rm e}^{\psi\circ h} \big((\psi_{\bar{u}}\circ h) h_z \bar{h}_{\bar{z}}\big(\dot{\bar{h}}+\bar{h}_z \dot{f} \big) +h_z\big(\dot{\bar{h}} +\bar{h}_{z} \dot{f}\big)_{\bar{z}} \big)\, {\rm d}^2z
\\
&-\int_{\Gamma\backslash \Omega_1} {\rm e}^{\psi\circ h} h_z \bar{h}_{{z}} \dot{f}_{\bar{z}} \, {\rm d}^2z.
\end{align*}
By integration by parts, we have
\begin{align*}
L_\mu E(h)={}& \int_{\Gamma\backslash \Omega_1} {\rm e}^{\psi\circ h} \big( (\psi_u\circ h)h_z \bar{h}_{\bar{z}} \big(\dot{h}+h_z \dot{f} \big) -(\psi_u\circ h)h_z \bar{h}_{\bar{z}} \big(\dot{h}+h_z \dot{f} \big) \big)\, {\rm d}^2z
\\
&-\int_{\Gamma\backslash \Omega_1}{\rm e}^{\psi\circ h} \big((\psi_{\bar{u}}\circ h) \bar{h}_z \bar{h}_{\bar{z}} \big(\dot{h}+h_z \dot{f} \big) + \bar{h}_{z\bar{z}} \big(\dot{h} +h_{z} \dot{f}\big)\big)\, {\rm d}^2z
\\
&+ \int_{\Gamma\backslash \Omega_1} {\rm e}^{\psi\circ h} \big( (\psi_{\bar{u}}\circ h)h_z \bar{h}_{\bar{z}} \big(\dot{\bar{h}}+\bar{h}_z \dot{f} \big) - (\psi_{\bar{u}}\circ h)h_z \bar{h}_{\bar{z}} \big(\dot{\bar{h}}+\bar{h}_z \dot{f} \big) \big) {\rm d}^2z
\\
&-\int_{\Gamma\backslash \Omega_1} {\rm e}^{\psi\circ h} \big( \big(\psi_{{u}}\circ h\big)h_z {h}_{\bar{z}} \big(\dot{\bar{h}}+\bar{h}_z \dot{f} \big)
+h_{z\bar{z}}\big(\dot{\bar{h}} +\bar{h}_{z} \dot{f}\big) \big)\, {\rm d}^2z
\\
&-\int_{\Gamma\backslash \Omega_1} {\rm e}^{\psi\circ h} h_z \bar{h}_{{z}} \dot{f}_{\bar{z}} \, {\rm d}^2z
=- \int_{\Gamma\backslash \Omega_1} {\rm e}^{\psi\circ h} h_z \bar{h}_{\bar{z}}\dot{f}_{\bar{z}} \, {\rm d}^2z.
\end{align*}
Here the last equality holds by the equality \eqref{e:harmonic-condition}.
\end{proof}

Let us remark that the equation \eqref{e:harmonic-condition} for the harmonic map condition is the Euler--Lagrange equation for the holomorphic energy functional given by
\begin{gather*}
E(g^\vep\circ h)=E(h^\vep\circ f^\vep) = \int_{\Gamma\backslash \Omega_1} {\rm e}^{\psi\circ h^\vep \circ f^\vep} |(h^\vep \circ f^\vep)_z|^2 \ {\rm d}^2z.
\end{gather*}
This can be checked easily as in the proof of Theorem \ref{t:E-variation}.
This will be also used crucially in the proof of the following theorem.

\begin{Theorem}\label{t:final-thm}
For a harmonic Beltrami differential $\mu\in\mathcal{B}^{-1,1}(\Gamma,\Omega_1)$,
\begin{gather}\label{e:variation-S-h}
L_\mu S[h] = \int_{\Gamma\backslash \Omega_1} \, \big(2\mathcal{S}(h) - K_\phi \Phi(h) \big)\, \mu \, {\rm d}^2z.
\end{gather}
\end{Theorem}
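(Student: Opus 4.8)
The plan is to split $S[h] = \check S[h] + E(h) + 2\pi(2g-2)$ and compute the Lie derivative of each piece. The constant $2\pi(2g-2)$ contributes nothing, and Theorem~\ref{t:E-variation} already gives $L_\mu E(h) = -\int_{\Gamma\backslash\Omega_1}\Phi(h)\mu\,{\rm d}^2z$. So the real work is $L_\mu\check S[h]$, and this is where I would invoke the promised generalization of Takhtajan--Teo's variational formula for a smooth family of conformal metrics (Section~\ref{s:variation}). That formula expresses $L_\mu\check S[\phi]$ in terms of $\check\omega[\phi]$, the associated $\check\theta$, $\check u$ cocycles, and the Lie derivative $L_\mu\phi$ of the conformal factor along the quasi-Fuchsian deformation. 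The expected shape of the answer is a bulk integral over $\Gamma\backslash\Omega_1$ whose integrand involves $\phi_z$, $\phi_{zz}$ and $L_\mu\phi$, with the boundary/quasi-circle terms either cancelling (because $\mu$ is supported on $\Omega_1$ and vanishes on $\mathcal C$) or reorganizing into exact terms that integrate to zero over the closed chains $F_1-F_2$, $L_1-L_2$, $W_1-W_2$.

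The key intermediate step is to identify $L_\mu\phi$. Since ${\rm e}^{\phi(z)} = {\rm e}^{\psi\circ h}h_z\bar h_{\bar z}$ on $\Omega_1$ and $h^\vep$ is the harmonic diffeomorphism associated to $X^\vep\to Y$, I would differentiate $\phi^\vep\circ f^\vep = \psi\circ h^\vep\circ f^\vep + \log(h^\vep_z\circ f^\vep) + \log(\bar h^\vep_{\bar z}\circ f^\vep) + \log|f^\vep_z|^2$ at $\vep=0$, using $\dot f_{\bar z} = \mu$, $\partial_{\bar\vep}|_0 f^\vep = 0$, and the harmonicity of $h$. The crucial simplification is that $g^\vep = h^\vep\circ f^\vep\circ h^{-1}$ is a self-map of the \emph{fixed} target $Y$, so the relevant variation of $h^\vep\circ f^\vep$ is governed by the Euler--Lagrange equation for the holomorphic energy $E(h^\vep\circ f^\vep)$ — exactly the remark made just before the theorem. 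This lets one eliminate the $\dot h$, $\dot h_z$ terms in favour of $\dot f_{\bar z}=\mu$ and its derivatives, just as in the proof of Theorem~\ref{t:E-variation}, reducing $L_\mu\phi$ to something explicit in $\mu$, $\phi_z$ and the Hopf differential.

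Once $L_\mu\phi$ is in hand, I would substitute into the conformal-family variational formula from Section~\ref{s:variation} and integrate by parts, pushing all derivatives off $\mu$ (this is legitimate since $\mu$ descends to $\Gamma\backslash\Omega_1$, which is closed). The terms produced should organize, via Proposition~\ref{p:energy-momentum} (which rewrites $\phi_{zz}-\tfrac12\phi_z^2$ in terms of $\mathcal S(h)$, $\Phi(h)$ and $\psi$-data) and \eqref{e:phi-z-bar-z}, \eqref{e:Gauss-cur} (which give $\phi_{z\bar z}$ and $K_\phi$), into a single bulk integrand. Adding the $-\int\Phi(h)\mu$ from $L_\mu E(h)$, I expect the $\psi$-dependent pieces — the terms involving $(\psi_{uu}-\tfrac12\psi_u^2)\circ h$ and $\psi_u\circ h$ — to cancel against each other and against contributions coming from the non-invariance cocycles $\check\theta[\phi]$, $\check u$, leaving precisely $\int_{\Gamma\backslash\Omega_1}(2\mathcal S(h) - K_\phi\Phi(h))\mu\,{\rm d}^2z$. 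The main obstacle, and the step I would budget the most care for, is exactly this bookkeeping: tracking the $\psi$-terms and the cocycle boundary terms and verifying their cancellation, since a priori $L_\mu\phi$ carries $\psi_{uu}$ and $\psi_u$ contributions that are not present in the final answer and must disappear only after the harmonic equation \eqref{e:harmonic-condition}, the Liouville equation \eqref{e:liouv}, and integration by parts are all used together.
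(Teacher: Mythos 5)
Your plan is essentially the paper's own proof: the same decomposition $S[h]=\check S[h]+E(h)+\mathrm{const}$, Theorem~\ref{t:E-variation} for $L_\mu E(h)$, the conformal-family formula of Theorem~\ref{t:firstvariation} for $L_\mu\check S$, identification of $\lambda=\dot\phi+\phi_z\dot f+\dot f_z$ via the criticality of the holomorphic energy under the variation $g^\vep\circ h$ (giving $\lambda=-\bar h_z\mu/\bar h_{\bar z}$), and Proposition~\ref{p:energy-momentum} together with \eqref{e:phi-z-bar-z}, \eqref{e:Gauss-cur} to reach the stated integrand. The only point where your bookkeeping prediction differs from what actually happens: the $\psi$-dependent terms do not cancel against cocycle contributions or require further integration by parts, but vanish outright because $\psi_{uu}-\tfrac12\psi_u^2\equiv 0$ for ${\rm e}^\psi=(\mathrm{Im}\,u)^{-2}$ on $\mathbb U$ (and the $\Omega_2$ contribution drops since $\lambda=0$ there by Ahlfors, the metric being hyperbolic on $\Omega_2$).
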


\begin{proof}
Putting $\lambda:=\dot{\phi}+\phi_z \dot{f} +\dot{f}_z$ with $\dot{\phi}=\frac{\rm d}{{\rm d}\vep}\big|_{\vep=0} \phi^{\vep\mu}$,
by Theorems \ref{t:E-variation} and~\ref{t:firstvariation},
\begin{gather*}
L_{\mu} S[h]= L_{\mu} \check{S}[h] +L_{\mu} E(h)
= \int_{\Gamma\backslash\Omega} \big(\big(2\phi_{zz}-\phi^2_z\big)\mu -2\phi_{z\bar{z}} \lambda\big)\, {\rm d}^2z -\int_{\Gamma\backslash\Omega_1} \Phi(h) \mu\, {\rm d}^2z.
\end{gather*}
For $\mu\in \mathcal{B}^{-1,1}(\Gamma,\Omega_1)$, $\mu$ vanishes over $\Omega_2$. Moreover the term $\lambda$ vanishes over $\Omega_2$ since the metric ${\rm e}^{\phi(z)}|{\rm d}z|^2$ is hyperbolic on $\Omega_2$. Hence, by Proposition \ref{p:energy-momentum} we have
\begin{align*}
L_{\mu} S[h]={}& \int_{\Gamma\backslash\Omega_1} \big((2\phi_{zz}-\phi^2_z)\mu -2\phi_{z\bar{z}} \lambda \big)\, {\rm d}^2z -\int_{\Gamma\backslash\Omega_1} \Phi(h) \mu\, {\rm d}^2z
\\
= {}&\int_{\Gamma\backslash\Omega_1} \bigg(\bigg(2\bigg(\bigg(\psi_{uu}-\frac12 \psi^2_u \bigg)\circ h\bigg) h_z^2+ \Phi(h) + 2\mathcal{S}(h) \bigg) \mu - 2\phi_{z\bar{z}}\lambda \bigg) \, {\rm d}^2z
\\
&-\int_{\Gamma\backslash\Omega_1} \Phi(h) \mu\, {\rm d}^2z
= \int_{\Gamma\backslash\Omega_1} \big( 2\mathcal{S}(h)\mu - 2\phi_{z\bar{z}}\lambda \big) \, {\rm d}^2z.
\end{align*}
Here, for the last equality, we used the fact that $\psi_{uu}-\frac12 \psi^2_u \equiv 0$ on $\mathbb{U}$ for ${\rm e}^\psi= (\mathrm{Im}(u))^{-2}$, where~$u$ denotes the global coordinate on $\mathbb{U}$.
Now we analyze the term $\lambda=\dot{\phi} +\phi_z \dot{f} + \dot{f}_z$ as follows. First, by definition,
\begin{gather}\label{e:lambda-var}
\dot{\phi}+\phi_z\dot{f}+\dot{f}_z = \psi_u\circ h \big(\dot{h}\!+h_z \dot{f} \big) +\psi_{\bar{u}} \circ h \big(\dot{\bar{h}}\!+ \bar{h}_z \dot{f}\big)
 + \frac{\dot{h}_z\!+h_{zz}\dot{f} \!+h_z \dot{f}_z}{ h_z}+ \frac{\dot{\bar{h}}_{\bar{z}}\!+\bar{h}_{\bar{z}z}\dot{f}}{ \bar{h}_{\bar{z}}}.
\end{gather}
On the other hand, recalling that the harmonic diffeomorphism $h\colon X\to Y$ is a critical point of the holomorphic energy functional along the variation $g^\vep \circ h$,
\begin{gather*}
0=\frac{\partial}{\partial\vep}\bigg|_{\vep=0} \big({\rm e}^{\psi\circ g^\vep \circ h} |(g^\vep \circ h)_z|^2 \big)
=\frac{\partial}{\partial\vep}\bigg|_{\vep=0} \big({\rm e}^{\psi\circ h^\vep \circ f^\vep} |(h^\vep \circ f^\vep)_z|^2 \big),
\end{gather*}
so that
\begin{gather}
0= {\rm e}^{\psi\circ h} |h_z|^2\bigg(\psi_u\circ h \big(\dot{h}+h_z \dot{f} \big) +\psi_{\bar{u}} \circ h \big( \dot{\bar{h}}+ \bar{h}_z \dot{f} \big)\nonumber
\\ \hphantom{0= {\rm e}^{\psi\circ h} |h_z|^2\bigg(}
{} + \frac{\dot{h}_z+h_{zz}\dot{f} +h_z \dot{f}_z}{ h_z}+ \frac{\dot{\bar{h}}_{\bar{z}}+\bar{h}_{\bar{z}z}\dot{f}+\bar{h}_{z}\dot{f}_{\bar{z}} }{ \bar{h}_{\bar{z}}} \bigg).\label{e:Ahlfors}
\end{gather}
Hence, by \eqref{e:lambda-var} and \eqref{e:Ahlfors} we have
\begin{gather}\label{e:ahlfors-app}
\dot{\phi}+\phi_z\dot{f}+\dot{f}_z = -\frac{\bar{h}_{z}\dot{f}_{\bar{z}} }{ \bar{h}_{\bar{z}}}.
\end{gather}
Finally, by \eqref{e:Gauss-cur}, \eqref{e:phi-z-bar-z}, and \eqref{e:ahlfors-app},
\begin{gather*}
- 2\phi_{z\bar{z}} \lambda= {\rm e}^{\psi\circ h} \big(|h_{{z}}|^2- |h_{\bar{z}}|^2\big) \frac{\bar{h}_{z} }{ \bar{h}_{\bar{z}}} \mu
= {\rm e}^{\psi\circ h} \bigg( h_{{z}} \bar{h}_{z}- h_{{z}} \bar{h}_{z} \frac{|h_{\bar{z}}|^2}{{|h_{z}|^2}} \bigg) \mu = -K_{\phi} \Phi(h) \mu.
\end{gather*}
This completes the proof.
\end{proof}

\begin{Remark}
When $X$ and $Y$ are the same Riemann surface, the harmonic diffeomorphism $h\colon X\to Y$ is induced by $J_1^{-1}\colon \Omega_1\to \mathbb{U}$ so that its Hopf differential $\Phi(h)$ vanishes. Hence, the variation formula \eqref{e:variation-S-h} simplifies at the origin point $X=Y$ in $\mathfrak{D}(\Gamma,\Omega_1) \simeq \mathfrak{T}(\Gamma_1)$.
This may suggest that the second variation formula for $S[h]$ would be simpler at the origin $X=Y$ than other points in $\mathfrak{D}(\Gamma,\Omega_1)$. This is the case of
the energy functional of harmonic diffeomorphisms whose second variation gives the Weil--Petersson symplectic 2-form at $X=Y$ (see~\cite[Corollary~5.8]{W89} and~\cite[Theorem~3.1.3]{Tromba}).
\end{Remark}

\section{Variation of Liouville action}\label{s:variation}

In this section, we compute the variation of the Liouville action defined for any smooth conformal metric. Most of the computations are similar to the one given in \cite{2}, where a smooth family of conformal metrics is given by the hyperbolic metrics.
However, we will have some additional terms since we do not assume the hyperbolic metric condition.
On the other hand, we will also see that the variational argument developed in \cite{2} works well for a smooth family of conformal metrics and these additional terms can be nicely organized.

Now we decompose the Liouville action $S=S[\phi]$ into two parts by
\begin{gather*}
S[\phi]= \check{S}[\phi] + \int_{\Gamma\backslash \Omega} {\rm e}^{\phi} {\rm d}^2z,
\end{gather*}
where $\check{S}[\phi]$ is defined in \eqref{e:def-check-S}. First we deal with the variation of $\check{S}=\check{S}[\phi]$.

For a harmonic Beltrami differential $\mu\in\mathcal{B}^{-1,1}(\Gamma)$, let $f^{\vep}=f^{\vep\mu}\colon X\to X^{\vep}$ denote the quasi-conformal map satisfying the Beltrami equation \eqref{e:beltrami-eq}.

\begin{Theorem}\label{t:firstvariation}
For a smooth family of conformal metrics ${\rm e}^{\phi^{\vep\mu}(z^\vep)}|{\rm d}z^\vep|^2$ on $X^\vep$,
\begin{gather*}
L_{\mu} \check{S}[\phi]=\int_{\Gamma\backslash \Omega} \big(\big(2\phi_{zz}-\phi^2_z\big)\mu -2\phi_{z\bar{z}} \lambda\big)\, {\rm d}^2z,
\end{gather*}
where $\lambda=\dot{\phi}+\phi_z \dot{f} +\dot{f}_z$ with $\dot{\phi}=\frac{\rm d}{{\rm d}\vep}|_{\vep=0} \phi^{\vep\mu}$.
\end{Theorem}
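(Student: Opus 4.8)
The plan is to carry out the variational computation of Takhtajan--Teo \cite[Section~4]{2} for the reduced action $\check{S}$, but with an arbitrary smooth conformal factor $\phi$ in place of the hyperbolic one; the new feature is that a few terms which vanish in the hyperbolic case now survive and have to be reorganized. Since $\mu$ is a harmonic Beltrami differential, the maps $f^{\vep\mu}$ and the conjugated generators $\gamma^{\vep\mu}=f^{\vep\mu}\circ\gamma\circ(f^{\vep\mu})^{-1}$ depend holomorphically on $\vep$ (so $\pa_{\bar{\vep}}|_{\vep=0}$ of each vanishes), and in particular $\dot{f}_{\bar{z}}=\mu$. I would first write $\check{S}[\phi^{\vep\mu}]$ as the pairing in \eqref{e:def-check-S} with the chains $F_i^{\vep}, L_i^{\vep}, W_i^{\vep}$ attached to the deformed group (the $f^{\vep\mu}$-transports of $F_i, L_i, W_i$) and with the forms $\check{\omega}[\phi^{\vep\mu}]$, $\check{\theta}[\phi^{\vep\mu}]$, $\check{u}^{\vep}$, and then differentiate at $\vep=0$ by the Leibniz rule. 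Each of the three pairings splits into a \emph{form-variation} term --- in which $\phi^{\vep\mu}$ and the generators move, producing $\lambda=\dot{\phi}+\phi_z\dot{f}+\dot{f}_z$ (which is the first variation at $\vep=0$ of the conformal factor $\phi^{\vep\mu}\circ f^{\vep\mu}+\log|f^{\vep\mu}_z|^2$ of the pulled-back metric), together with the $\vep$-derivatives of $\gamma^{\vep\mu}$ --- and a \emph{chain-variation} term, which I would handle via the Cartan formula $L_X={\rm d}\,\iota_X+\iota_X\,{\rm d}$ with $X$ the velocity field of $f^{\vep\mu}$ transporting the fundamental domain and its edges $a_k, a_k', b_k, b_k'$.

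The combinatorial core is to show that these pieces telescope, exactly as in \cite{2}. On the homology side I would use $\pa'F_i=\pa''L_i$, $\pa'L_i=\pa''V_i$ and the definition of $W_i$; on the cohomology side the identities $\delta\check{\omega}[\phi]=\delta\omega[\phi]={\rm d}\check{\theta}[\phi]$, $\delta\check{\theta}[\phi]=\check{u}$ together with the ${\rm d}$-closedness of $\check{u}$. Feeding these into Stokes' theorem and using $\Gamma$-equivariance term by term, all the boundary and group-cocycle contributions should cancel against one another, leaving only the interior part of the variation of $\langle\check{\omega}[\phi^{\vep\mu}], F_1^{\vep}-F_2^{\vep}\rangle$. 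It is worth stressing that this cancellation does not use the Liouville equation: $\check{u}$ is metric-independent, and the $\phi$-dependent factor of $\check{\theta}[\phi]$ is linear in $\phi$, so literally the same pairings arise as in \cite{2}, now with $\phi$ arbitrary.

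It then remains to compute the $\vep$-derivative at $\vep=0$ of $\frac{\rm i}{2}\langle\check{\omega}[\phi^{\vep\mu}], F_1^{\vep}-F_2^{\vep}\rangle$ after transporting everything to a fixed fundamental domain via $w=f^{\vep\mu}(z)$. Using $f^{\vep}_{\bar{z}}=\vep\mu f^{\vep}_z$ one expands $(\pa_w\phi^{\vep})\circ f^{\vep}$ and ${\rm d}w\wedge {\rm d}\bar{w}$ to first order in $\vep$ and then integrates by parts. The boundary terms produced on $\pa F$ do \emph{not} vanish on their own --- $|\phi_z|^2\,{\rm d}z\wedge {\rm d}\bar{z}$ is not $\Gamma$-invariant, because $\phi_z$ transforms as a connection --- but they are precisely the terms absorbed by the $\check{\theta}$- and $\check{u}$-contributions in the previous step, so the net effect is the bulk integrand. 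Collecting terms and using $\dot{f}_{\bar{z}}=\mu$, one should arrive at $\int_{\Gamma\backslash\Omega}\big((2\phi_{zz}-\phi_z^2)\mu-2\phi_{z\bar{z}}\lambda\big)\,{\rm d}^2z$. This is exactly where the extra terms relative to \cite{2} appear: in the hyperbolic case one would substitute $\phi_{z\bar{z}}=\tfrac12{\rm e}^{\phi}$ and absorb $-2\phi_{z\bar{z}}\lambda$ into the variation of the area term of the full action $S$, whereas here both summands have to be kept.

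The step I expect to be the main obstacle is this middle bookkeeping: checking that, without the Liouville equation, every boundary and $\delta$-cocycle term coming out of the differentiated pairings falls into the same cancellations as in \cite[Section~4]{2}, and that the first-order expansion of the bulk integral --- in which $\dot{\phi}$, $\dot{f}$, $\dot{f}_z$ and $\mu$ interact through the Beltrami chain rule --- collapses cleanly to the two displayed summands with nothing left over. Along the way I would also record the $\Gamma$-equivariance of the answer: $\phi_{zz}-\tfrac12\phi_z^2$ is an honest quadratic differential for $\Gamma$ because the Schwarzian of a M\"obius transformation vanishes, while $\phi_{z\bar{z}}$ and $\lambda$ transform so that both integrands descend to $\Gamma\backslash\Omega$. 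Once this is settled, the argument is the same machine as in \cite{2}, now driven by an arbitrary $\phi$.
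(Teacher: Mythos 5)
Your overall architecture coincides with the paper's: differentiate the three pairings in \eqref{e:def-check-S} (with the Lie derivative defined by pulling everything back through $f^{\vep\mu}$), convert the exact pieces of $L_\mu\check{\omega}$ into pairings over $L_1-L_2$ via $\pa'F_i=\pa''L_i$, and cancel the resulting cochain against the $L_\mu\check{\theta}$- and $L_\mu\check{u}$-contributions using $\pa'L_i=\pa''V_i$ and the chains $W_i$, leaving only the bulk term. One small remark: since $L_\mu$ is defined by pullback to the fixed chains, there is no separate chain-variation term to treat with Cartan's formula; that part of your plan is already absorbed in the change of variables.

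However, there is a genuine gap at exactly the point you defer to ``bookkeeping'', and your assertion that ``literally the same pairings arise as in \cite{2}'' is not correct. In the hyperbolic case of \cite{2}, Ahlfors' theorem gives $\lambda=\dot{\phi}+\phi_z\dot{f}+\dot{f}_z=0$; once hyperbolicity is dropped, new $\lambda$-dependent terms appear both in $L_\mu\check{\omega}$ (Lemma \ref{l:1} produces the exact pieces ${\rm d}(\phi_z\lambda\,{\rm d}z)$ and ${\rm d}(\phi_{\bar z}\lambda\,{\rm d}\bar z)$ in addition to ${\rm d}\xi$) and in $L_\mu\check{\theta}$ (because $L_\mu\phi=\lambda-\dot{f}_z$ no longer reduces to a function of $\dot{f}$ alone). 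The actual content of the proof is to show that these new terms organize: (i) one needs the $\Gamma$-invariance of $\lambda$, obtained by differentiating the cocycle relation $\phi^{\vep\mu}\circ\gamma^{\vep\mu}+\log\big|(\gamma^{\vep\mu})'\big|^2=\phi^{\vep\mu}$ in $\vep$, which both makes the bulk integrand descend to $\Gamma\backslash\Omega$ and gives $\delta(\phi_z\lambda\,{\rm d}z-\phi_{\bar z}\lambda\,{\rm d}\bar z)_{\gamma^{-1}}=-\lambda\big(\frac{\gamma''}{\gamma'}{\rm d}z-\frac{\overline{\gamma''}}{\overline{\gamma'}}{\rm d}\bar z\big)$, exactly cancelling the $\lambda$-terms of $L_\mu\check{\theta}_{\gamma^{-1}}$; and (ii) one must exhibit an explicit primitive $l_{\gamma^{-1}}$ with $\chi_{\gamma^{-1}}={\rm d}l_{\gamma^{-1}}$ for the combined cochain $\chi=\delta\xi+\delta(\phi_z\lambda\,{\rm d}z-\phi_{\bar z}\lambda\,{\rm d}\bar z)+L_\mu\check{\theta}$ (Proposition \ref{p:exact-l}), which is what lets $\langle\chi,L_1-L_2\rangle$ and $\langle L_\mu\check{u},W_1-W_2\rangle$ cancel through $\pa'L=\pa''V$. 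Without these two verifications the ``same cancellations as in \cite{2}'' are not available verbatim, because the terms to be cancelled are genuinely different; as written, your argument does not close at its central step.
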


Most of the remaining part of this section is a proof of Theorem \ref{t:firstvariation}. By definition,
\begin{gather}\label{e:def-var}
L_{\mu}\check{S}[\phi] =\frac{\rm i}{2}\bigl(\langle L_{\mu}\check{\omega}, F_1-F_2 \rangle-\big\langle L_{\mu}\check{\theta}, L_1-L_2 \big\rangle+\langle L_{\mu}\check{u}, W_1-W_2 \rangle\bigr).
\end{gather}
To deal with the first term on the right hand side of \eqref{e:def-var}, we start with some lemmas.
\begin{Lemma}\label{l:1}
The following equality holds
\begin{gather}\label{e:1}
L_{\mu}\check{\omega}
=\big(\big(2\phi_{zz}-\phi^2_z\big)\mu -2\phi_{z\bar{z}} \lambda \big)\, {\rm d}z\wedge {\rm d}\bar{z} -{\rm d}(\phi_z \lambda \,{\rm d}z) +{\rm d}(\phi_{\bar{z}} \lambda\, {\rm d}\bar{z}) -{\rm d}\xi,
\end{gather}
where $\lambda=\dot{\phi}+\phi_z \dot{f} +\dot{f}_z$,\, $\xi=2\phi_z\dot{f}_{\bar{z}}{\rm d}\bar{z}-\phi \,{\rm d}\dot{f}_z$.
\end{Lemma}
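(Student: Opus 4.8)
The plan is to compute $L_{\mu}\check{\omega}$ directly from the definition of the Lie derivative and then reorganise the outcome into the four terms on the right-hand side of \eqref{e:1}. Since $\phi$ is real-valued we write $\check{\omega}=|\phi_z|^{2}\,{\rm d}z\wedge{\rm d}\bar z=\phi_z\,\overline{\phi_z}\,{\rm d}z\wedge{\rm d}\bar z$, and on $X^{\vep\mu}$ the corresponding $2$-form is $\check{\omega}^{\vep\mu}=\big|\phi^{\vep\mu}_{z^{\vep}}\big|^{2}\,{\rm d}z^{\vep}\wedge{\rm d}\bar z^{\vep}$ with $z^{\vep}=f^{\vep\mu}(z)$. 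Pulling back along $f^{\vep\mu}$ gives $(f^{\vep\mu})^{*}({\rm d}z^{\vep}\wedge{\rm d}\bar z^{\vep})=\big(|f^{\vep\mu}_{z}|^{2}-|f^{\vep\mu}_{\bar z}|^{2}\big){\rm d}z\wedge{\rm d}\bar z$, and since $f^{\vep\mu}_{\bar z}=\vep\mu f^{\vep\mu}_{z}$ by \eqref{e:beltrami-eq} this is $|f^{\vep\mu}_{z}|^{2}\big(1-|\vep\mu|^{2}\big){\rm d}z\wedge{\rm d}\bar z$, the factor $1-|\vep\mu|^{2}$ having vanishing $\vep$-derivative at $\vep=0$. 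Thus, setting $P^{\vep}:=\big(\phi^{\vep\mu}_{z^{\vep}}\circ f^{\vep\mu}\big)f^{\vep\mu}_{z}$ so that $P^{0}=\phi_z$, the computation reduces to
\begin{gather*}
L_{\mu}\check{\omega}=\bigg(\frac{\partial}{\partial\vep}\bigg|_{\vep=0}\big|P^{\vep}\big|^{2}\bigg)\,{\rm d}z\wedge{\rm d}\bar z=\bigg(\dot P\,\overline{\phi_z}+\phi_z\,\frac{\partial}{\partial\vep}\bigg|_{\vep=0}\overline{P^{\vep}}\bigg)\,{\rm d}z\wedge{\rm d}\bar z .
\end{gather*}

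The key step is to express $\dot P$ and $\frac{\partial}{\partial\vep}\big|_{\vep=0}\overline{P^{\vep}}$ through $\lambda$ by applying the chain rule to $g^{\vep}:=\phi^{\vep\mu}\circ f^{\vep\mu}$. From $\partial_{z}\overline{f^{\vep\mu}}=\overline{f^{\vep\mu}_{\bar z}}=\bar{\vep}\,\bar{\mu}\,\overline{f^{\vep\mu}_{z}}$ and $\partial_{\bar z}\overline{f^{\vep\mu}}=\overline{f^{\vep\mu}_{z}}$ one gets
\begin{gather*}
\partial_{z}g^{\vep}=P^{\vep}+\bar{\vep}\,\bar{\mu}\,\overline{P^{\vep}}, \qquad \partial_{\bar z}g^{\vep}=\vep\mu\,P^{\vep}+\overline{P^{\vep}} .
\end{gather*}
Differentiating at $\vep=0$, and using that $f^{\vep\mu}$ is holomorphic in $\vep$ — so $\frac{\partial}{\partial\vep}\big|_{\vep=0}\overline{f^{\vep\mu}}=0$, which also annihilates the $\bar{\vep}$-term — we obtain $\dot P=\partial_{z}\dot g$ and $\frac{\partial}{\partial\vep}\big|_{\vep=0}\overline{P^{\vep}}=\partial_{\bar z}\dot g-\mu\phi_z$, where $\dot g=\dot{\phi}+\phi_z\dot f$. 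Since $\lambda=\dot g+\dot f_{z}$ and $\dot f_{\bar z}=\mu$, this yields $\dot P=\lambda_{z}-\dot f_{zz}$ and $\frac{\partial}{\partial\vep}\big|_{\vep=0}\overline{P^{\vep}}=\lambda_{\bar z}-\mu_{z}-\mu\phi_z$, whence
\begin{gather*}
L_{\mu}\check{\omega}=\big(\phi_{\bar z}\lambda_{z}+\phi_z\lambda_{\bar z}-\phi_z^{2}\mu-\phi_{\bar z}\dot f_{zz}-\phi_z\mu_{z}\big)\,{\rm d}z\wedge{\rm d}\bar z .
\end{gather*}

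It remains to check that the right-hand side of \eqref{e:1} expands to the same expression, which is a routine calculation: ${\rm d}(\phi_z\lambda\,{\rm d}z)=-(\phi_{z\bar z}\lambda+\phi_z\lambda_{\bar z})\,{\rm d}z\wedge{\rm d}\bar z$ and ${\rm d}(\phi_{\bar z}\lambda\,{\rm d}\bar z)=(\phi_{z\bar z}\lambda+\phi_{\bar z}\lambda_{z})\,{\rm d}z\wedge{\rm d}\bar z$, while rewriting $\xi$ via $\dot f_{\bar z}=\mu$, $\dot f_{z\bar z}=\mu_z$ gives $\xi=-\phi\,\dot f_{zz}\,{\rm d}z+(2\phi_z\mu-\phi\mu_z)\,{\rm d}\bar z$ and hence ${\rm d}\xi=(\phi_{\bar z}\dot f_{zz}+2\phi_{zz}\mu+\phi_z\mu_z)\,{\rm d}z\wedge{\rm d}\bar z$. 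Adding $\big((2\phi_{zz}-\phi_z^{2})\mu-2\phi_{z\bar z}\lambda\big)\,{\rm d}z\wedge{\rm d}\bar z$ to $-{\rm d}(\phi_z\lambda\,{\rm d}z)+{\rm d}(\phi_{\bar z}\lambda\,{\rm d}\bar z)-{\rm d}\xi$, the $2\phi_{zz}\mu$ terms cancel, the three $\phi_{z\bar z}\lambda$ contributions add up to zero, and what survives is exactly the displayed form of $L_{\mu}\check{\omega}$ (recall $\phi_{\bar z}=\overline{\phi_z}$).

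The step I expect to demand the most care is the bookkeeping of $\vep$- versus $\bar{\vep}$-dependence in the second paragraph. Every nonstandard term originates from differentiating $\overline{f^{\vep\mu}}$ or $\overline{f^{\vep\mu}_z}$; since $f^{\vep\mu}$ is holomorphic in $\vep$ these are holomorphic in $\bar{\vep}$ alone, so such a factor contributes to $\frac{\partial}{\partial\vep}\big|_{\vep=0}$ only if it is first differentiated in $\bar z$ — producing $\overline{f^{\vep\mu}_z}$ with no $\bar{\vep}$ factor — and not if differentiated in $z$, which creates a $\bar{\vep}$ factor that kills it. In particular one must not replace $\frac{\partial}{\partial\vep}\big|_{\vep=0}\overline{P^{\vep}}$ by $\overline{\dot P}$, since $P^{\vep}$ is not holomorphic in $\vep$; instead $\overline{P^{\vep}}$ must be tracked through its own relation $\partial_{\bar z}g^{\vep}=\vep\mu P^{\vep}+\overline{P^{\vep}}$. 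Finally, no use is made of harmonicity, of the Liouville equation, or of the transformation law \eqref{e:phi-behave}: Lemma~\ref{l:1} is a purely local coordinate-change identity valid for any smooth family of conformal metrics, and the closed $1$-form corrections in \eqref{e:1} are precisely the non-tensorial remainder to be absorbed into the variations of $\check{\theta}$ and $\check{u}$ in \eqref{e:def-var}.
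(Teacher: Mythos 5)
Your proposal is correct and follows essentially the same route as the paper: a direct computation of the $\vep$-derivative of the pulled-back $2$-form $(f^{\vep\mu})^*\check{\omega}^{\vep\mu}$, using the holomorphic dependence on $\vep$ (so that $\bar{f}^{\vep\mu}$ and the factor $1-|\vep\mu|^2$ contribute nothing) and $\dot{f}_{\bar z}=\mu$, followed by algebraic rearrangement into the terms of \eqref{e:1}. The only difference is organizational: the paper massages the derivative directly into the exact forms $-{\rm d}(\phi_z\lambda\,{\rm d}z)+{\rm d}(\phi_{\bar z}\lambda\,{\rm d}\bar z)-{\rm d}\xi$, while you compute both sides in terms of $\lambda_z$, $\lambda_{\bar z}$, $\mu$, $\mu_z$, $\dot f_{zz}$ and check that they agree, which amounts to the same calculation.
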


\begin{proof}
The proof is just a straightforward computation as follows.
\begin{align*}
L_{\mu}\check{\omega}={}& \frac{\pa}{\pa\vep}\bigg|_{\vep=0} \big((\phi^{\vep\mu})_z\circ f^{\vep\mu} {\rm d}f^{\vep\mu} \wedge(\phi^{\vep\mu})_{\bar{z}}\circ f^{\vep\mu} {\rm d}\bar{f}^{\vep\mu} \big)
\\[.3ex]
={}&\big(\dot{\phi}_z +\phi_{zz} \dot{f} +\phi_z \dot{f}_{{z}}\big)\phi_{\bar{z}}\, {\rm d}z\wedge {\rm d}\bar{z}+ \phi_z\big(\dot{\phi}_{\bar{z}}+\phi_{z\bar{z}}\dot{f}\big)\, {\rm d}z\wedge {\rm d}\bar{z}
\\[.3ex]
={}&\big(\dot{\phi} +\phi_{z} \dot{f} + \dot{f}_{{z}}\big)_z\phi_{\bar{z}}\, {\rm d}z\wedge {\rm d}\bar{z}+ \phi_z\big(\dot{\phi}+\phi_{z}\dot{f}+\dot{f}_z\big)_{\bar{z}} \, {\rm d}z\wedge {\rm d}\bar{z}
\\[.3ex]
 &-\big(\phi_{\bar{z}} \dot{f}_{zz} +\phi_z\big(\phi_z\dot{f}_{\bar{z}}+\dot{f}_{z\bar{z}}\big)\big)\, {\rm d}z\wedge {\rm d}\bar{z}
 \\[.3ex]
 ={}&\big(2\phi_{zz}-\phi^2_z\big)\mu\, {\rm d}z\wedge {\rm d}\bar{z}-{\rm d}\big(2\phi_z\dot{f}_{\bar{z}}{\rm d}\bar{z}-\phi \,{\rm d}\dot{f}_z\big) -{\rm d}(\phi_z \lambda \,{\rm d}z) +{\rm d}(\phi_{\bar{z}} \lambda \,{\rm d}\bar{z})
 \\[.3ex]
 &-2\phi_{z\bar{z}} \lambda\, {\rm d}z\wedge {\rm d}\bar{z}.\tag*{\qed}
\end{align*}
\renewcommand{\qed}{}
\end{proof}

\begin{Remark}
The term $\lambda=\dot{\phi}+\phi_z \dot{f} +\dot{f}_z$ in Lemma \ref{l:1} vanishes when the metrics ${\rm e}^{\phi(z^\vep)} |{\rm d}z^\vep|^2$ are the hyperbolic metrics on $X^\vep$ by the work of Ahlfors in \cite{3}.
\end{Remark}

By Lemma \ref{l:1}, the equality \eqref{e:def-var} can be rewritten as follows:
\begin{gather}
L_{\mu}\check{S}[\phi]
 =\frac{\rm i}{2}\bigl(\langle L_{\mu}\check{\omega}, F_1-F_2 \rangle-\big\langle L_{\mu}\check{\theta}, L_1-L_2 \big\rangle+\langle L_{\mu}\check{u}, W_1-W_2 \rangle\bigr)\nonumber
 \\ \hphantom{L_{\mu}\check{S}[\phi]}
{}=\frac{\rm i}{2}\bigl(\big\langle \big(\big(2\phi_{zz}-\phi^2_z\big)\mu -2\phi_{z\bar{z}} \lambda \big)\, {\rm d}z\wedge {\rm d}\bar{z}, F_1-F_2 \big\rangle \nonumber
\\ \hphantom{L_{\mu}\check{S}[\phi]=\frac{\rm i}{2}\bigl(}
{} -\!\langle {\rm d}(\phi_z \lambda \,{\rm d}z)\! - {\rm d}(\phi_{\bar{z}} \lambda\, {\rm d}\bar{z})+ {\rm d}\xi, F_1\!-F_2 \rangle\! -\langle L_{\mu}\check{\theta}, L_1-L_2 \rangle +\langle L_{\mu}\check{u}, W_1\!-W_2 \rangle\bigr)\nonumber
\\ \hphantom{L_{\mu}\check{S}[\phi]}
{}=\frac{\rm i}{2}\bigl(\big\langle\big(\big(2\phi_{zz}\!-\phi^2_z\big)\mu \!-2\phi_{z\bar{z}} \lambda \big)\, {\rm d}z\wedge {\rm d}\bar{z}, F_1\!-F_2 \big\rangle \!-\langle \delta(\phi_z \lambda \,{\rm d}z\!- \phi_{\bar{z}} \lambda\, {\rm d}\bar{z} + \xi), L_1\!-L_2 \rangle\nonumber
\\ \hphantom{L_{\mu}\check{S}[\phi]=\frac{\rm i}{2}\bigl(}
{} -\langle L_{\mu}\check{\theta}, L_1-L_2 \rangle+\langle L_{\mu}\check{u}, W_1-W_2 \rangle\bigr),
\label{e:der-var1}
\end{gather}
where the third equality follows from $\partial' F_i=\partial'' L_i$ for $i=1,2$.
To deal with terms in the last line of \eqref{e:der-var1} together, let us put
\begin{gather}\label{e:def-chi}
{\chi}:= \delta\xi + \delta(\phi_z \lambda \,{\rm d}z - \phi_{\bar{z}} \lambda\, {\rm d}\bar{z}) + L_{\mu} \check{\theta}.
\end{gather}

First we have
\begin{Lemma}
The ${\chi}$ satisfies that $d\chi=0$ and $\delta\chi=L_\mu\check{u}$ on $\Omega$.
\end{Lemma}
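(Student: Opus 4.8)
The plan is to prove the two assertions separately, relying throughout on the fact that on the double complex $\mathsf{C}^{p,q}$ the de Rham differential $d$, the group coboundary $\delta$, and the Lie derivative $L_\mu$ commute pairwise: $d$ and $\delta$ because the bidegrees are arranged so that $D=d+(-1)^p\delta$ squares to zero, $d$ and $L_\mu$ because $L_\mu$ is the infinitesimal pullback by $f^{\vep\mu}$ and pullback commutes with $d$, and $\delta$ and $L_\mu$ because $f^{\vep\mu}$ intertwines the $\Gamma$-action with the $\Gamma^{\vep\mu}$-action (all as reviewed in \cite{2}). Granting this, the identity $\delta\chi=L_\mu\check u$ is purely formal: in \eqref{e:def-chi} the first two summands of $\chi$ are $\delta$-exact and hence annihilated by $\delta$, while $\delta L_\mu\check\theta=L_\mu\delta\check\theta=L_\mu\check u$ since $\check u=\delta\check\theta[\phi]$.

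For $d\chi=0$ I would commute $d$ past the two $\delta$'s in \eqref{e:def-chi} and rewrite the remaining term as $dL_\mu\check\theta=L_\mu d\check\theta=L_\mu\delta\check\omega=\delta L_\mu\check\omega$, using $d\check\theta[\phi]=\delta\omega[\phi]=\delta\check\omega[\phi]$ (the last equality because ${\rm e}^{\phi}\,{\rm d}z\wedge{\rm d}\bar z$ is $\Gamma$-invariant). Substituting the expression for $L_\mu\check\omega$ from Lemma~\ref{l:1}, the three ${\rm d}$-exact terms appearing there — once acted on by $\delta d=d\delta$ — cancel exactly against the two surviving terms $\delta d\xi$ and $\delta d(\phi_z\lambda\,{\rm d}z-\phi_{\bar z}\lambda\,{\rm d}\bar z)$ coming from the first two summands of $d\chi$. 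What remains is
\begin{gather*}
d\chi=\delta\bigl[\bigl(\bigl(2\phi_{zz}-\phi_z^2\bigr)\mu-2\phi_{z\bar z}\lambda\bigr)\,{\rm d}z\wedge{\rm d}\bar z\bigr],
\end{gather*}
so it suffices to show that the bracketed $2$-form lies in $\ker\delta$, that is, is $\Gamma$-invariant.

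I expect this last verification — the one genuinely computational point — to be the (modest) obstacle. Differentiating the automorphy relation \eqref{e:phi-behave} for the deformed potentials $\phi^{\vep\mu}$ along the deformation shows that $\lambda=\dot\phi+\phi_z\dot f+\dot f_z$ transforms as a $\Gamma$-invariant function; since $\partial_z\partial_{\bar z}\log|\gamma'|^2=0$, the form $\phi_{z\bar z}\,{\rm d}z\wedge{\rm d}\bar z$ is $\Gamma$-invariant as well, so $\phi_{z\bar z}\lambda\,{\rm d}z\wedge{\rm d}\bar z$ descends to $\Gamma\backslash\Omega$. Differentiating \eqref{e:phi-behave} twice in $z$ and using that the Schwarzian derivative of a M\"obius transformation vanishes shows that $2\phi_{zz}-\phi_z^2$ transforms as a quadratic differential; pairing it with the Beltrami differential $\mu$ and with ${\rm d}z\wedge{\rm d}\bar z$ produces a $\Gamma$-invariant $2$-form. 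Hence the bracket is $\Gamma$-invariant, $\delta$ annihilates it, and $d\chi=0$ on $\Omega$.
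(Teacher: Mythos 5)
Your proposal is correct and takes essentially the same route as the paper: $\delta\chi=L_\mu\check{u}$ via $\delta^2=0$ and $\delta L_\mu=L_\mu\delta$, and ${\rm d}\chi=0$ by combining Lemma~\ref{l:1} with ${\rm d}\check{\theta}=\delta\check{\omega}$, the commutation relations, and the $\Gamma$-invariance of $\big(\big(2\phi_{zz}-\phi_z^2\big)\mu-2\phi_{z\bar z}\lambda\big)\,{\rm d}z\wedge{\rm d}\bar z$. The only step you compress is the $\Gamma$-invariance of $\lambda$, which the paper proves explicitly by differentiating $\phi^{\vep\mu}\circ\gamma^{\vep\mu}+\log(\gamma^{\vep\mu})'+\log(\bar\gamma^{\vep\mu})'=\phi^{\vep\mu}$ together with $f^{\vep\mu}\circ\gamma=\gamma^{\vep\mu}\circ f^{\vep\mu}$; your one-line justification amounts to the same computation.
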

\begin{proof}
The second equality follows easily by
\begin{gather*}
\delta\chi=\delta\big(\delta\xi + \delta(\phi_z \lambda \,{\rm d}z - \phi_{\bar{z}} \lambda\, {\rm d}\bar{z}) + L_{\mu} \check{\theta}\big) = \delta L_\mu \check{\theta}= L_\mu\delta\check{\theta}=L_\mu\check{u}.
\end{gather*}
To show the first equality $d\chi=0$, we start with some equalities. For the following equality
\begin{gather}\label{e:basic-eq-var}
\phi^{\vep\mu}\circ \gamma^{\vep\mu} +\log (\gamma^{\vep\mu})' +\log (\bar{\gamma}^{\vep\mu})' = \phi^{\vep\mu},
\end{gather}
we take derivative with respect to $\vep$ to obtain
\begin{gather}\label{e:app1}
\dot{\phi}\circ\gamma +\phi_z\circ \gamma \dot{\gamma} +\frac{\dot{\gamma}'}{\gamma'} =\dot{\phi}.
\end{gather}
We also take derivative with respect to $z$ and put $\vep=0$ for the equality \eqref{e:basic-eq-var} to get
\begin{gather}\label{e:app2}
\phi_z\circ\gamma \gamma' +\frac{\gamma''}{\gamma'} =\phi_z.
\end{gather}
Similarly taking derivative with respect to $\vep$ for $f^{\vep\mu}\circ \gamma=\gamma^{\vep\mu}\circ f^{\vep\mu}$, we have
\begin{gather}\label{e:app3}
\dot{f}\circ\gamma=\dot{\gamma}+ \gamma'\dot{f}.
\end{gather}
Using \eqref{e:app1}, \eqref{e:app2}, and \eqref{e:app3}, we observe that $\lambda$ satisfies
\begin{align*}
\lambda \circ \gamma &= \dot{\phi}\circ \gamma +\phi_z\circ \gamma \cdot \dot{f}\circ\gamma +\dot{f}_z\circ \gamma
\\
 &= \dot{\phi} -\bigg(\phi_z -\frac{\gamma''}{\gamma'}\bigg)\frac{\dot{\gamma}}{\gamma'} -\frac{\dot{\gamma}'}{\gamma'}
 + \frac{\phi_z-\frac{\gamma''}{\gamma'}}{\gamma'} (\dot{\gamma}+\gamma' \dot{f})
 + \bigg(\dot{f}_z+\frac{\dot{\gamma}'}{\gamma'} +\frac{\gamma''}{\gamma'} \dot{f}\bigg)
 \\
& = \dot{\phi}+\phi_z\dot{f} +\dot{f}_z =\lambda.
\end{align*}
Hence, $\lambda$ is $\Gamma$-invariant and this implies
\begin{gather}\label{e:lambda-gamma}
\big(\big(2\phi_{zz}-\phi^2_z\big)\mu -2\phi_{z\bar{z}} \lambda \big)\circ \gamma |\gamma'|^2\, {\rm d}z\wedge {\rm d}\bar{z} = \big(\big(2\phi_{zz}-\phi^2_z\big)\mu -2\phi_{z\bar{z}} \lambda \big)\, {\rm d}z\wedge {\rm d}\bar{z}.
\end{gather}
Recalling the definition of ${\chi}$ in \eqref{e:def-chi} and using the equalities \eqref{e:1} and \eqref{e:lambda-gamma},
\begin{align*}
{\rm d}\chi &= \delta\big({\rm d}\xi+ {\rm d} (\phi_z \lambda \,{\rm d}z- \phi_{\bar{z}} \lambda\, {\rm d}\bar{z})\big) + L_{\mu}{\rm d}\check{\theta}
\\
&= \delta\big({-}L_\mu \check{\omega}+ \big(\big(2\phi_{zz}-\phi^2_z\big)\mu -2\phi_{z\bar{z}} \lambda \big)\, {\rm d}z\wedge {\rm d}\bar{z}\big)+ L_{\mu}{\rm d}\check{\theta}
= - \delta L_\mu \check{\omega} +L_\mu \delta \check{\omega} =0.
\end{align*}
This completes the proof.
\end{proof}

\begin{Lemma}\label{l:l3}
\begin{gather*}
\delta\xi_{\gamma^{-1}}=-2\big(\dot{f}_{z\bar{z}}\circ\gamma\overline{\gamma'}-\dot{f}_{z\bar{z}}\big){\rm d}\bar{z}-\phi\, {\rm d}\big(\dot{f}_z\circ\gamma-\dot{f}_z\big)+\log|\gamma'|^2 {\rm d}\big(\dot{f}_z\circ\gamma\big).
\end{gather*}
\end{Lemma}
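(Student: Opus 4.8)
The plan is to expand $\delta\xi_{\gamma^{-1}}$ straight from the definition of the group coboundary on $\mathsf{C}^{1,0}$: a $1$-form $\eta$ is sent by $\delta$ to the $\mathsf{C}^{1,1}$-cochain with $(\delta\eta)_{\gamma^{-1}}=\gamma^{*}\eta-\eta$, the pullback being taken along $\gamma$ (this is the convention consistent with $\delta\omega[\phi]={\rm d}\check\theta[\phi]$). Since $\xi=2\phi_z\dot{f}_{\bar z}\,{\rm d}\bar z-\phi\,{\rm d}\dot{f}_z$, everything reduces to computing $\gamma^{*}\xi$ term by term, using transformation rules that are already available in the excerpt.

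First I would collect those rules. From \eqref{e:phi-behave}, $\phi\circ\gamma=\phi-\log|\gamma'|^{2}$, and differentiating this in $z$ recovers \eqref{e:app2}, i.e.\ $(\phi_z\circ\gamma)\,\gamma'=\phi_z-\gamma''/\gamma'$. Since $\dot{f}_{\bar z}=\mu$ and $\mu$ is a Beltrami differential for $\Gamma$, one has $\mu\circ\gamma=\mu\,\gamma'/\overline{\gamma'}$, equivalently $(\dot{f}_{\bar z}\circ\gamma)\,\overline{\gamma'}=\gamma'\dot{f}_{\bar z}$ (this is also $\partial_{\bar z}$ of \eqref{e:app3}); and differentiating \eqref{e:app3} in $z$ gives $\dot{f}_z\circ\gamma=\dot{f}_z+\dot\gamma'/\gamma'+(\gamma''/\gamma')\dot{f}$. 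Now pull $\xi$ back. For the $(0,1)$-part, $\gamma^{*}\big(2\phi_z\dot{f}_{\bar z}\,{\rm d}\bar z\big)=2(\phi_z\circ\gamma)(\dot{f}_{\bar z}\circ\gamma)\,\overline{\gamma'}\,{\rm d}\bar z=2\big((\phi_z-\gamma''/\gamma')/\gamma'\big)\gamma'\dot{f}_{\bar z}\,{\rm d}\bar z=2(\phi_z-\gamma''/\gamma')\dot{f}_{\bar z}\,{\rm d}\bar z$, so the $\phi_z\dot{f}_{\bar z}$ contribution cancels against $-\xi$ and this piece leaves $-2(\gamma''/\gamma')\dot{f}_{\bar z}\,{\rm d}\bar z$. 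Since ${\rm d}$ commutes with $\gamma^{*}$, $\gamma^{*}(-\phi\,{\rm d}\dot{f}_z)=-(\phi-\log|\gamma'|^{2})\,{\rm d}(\dot{f}_z\circ\gamma)$, whose difference with $-\phi\,{\rm d}\dot{f}_z$ is $-\phi\,{\rm d}(\dot{f}_z\circ\gamma-\dot{f}_z)+\log|\gamma'|^{2}\,{\rm d}(\dot{f}_z\circ\gamma)$. Adding the two pieces yields exactly the asserted right-hand side, except that its first term appears here as $-2(\gamma''/\gamma')\dot{f}_{\bar z}\,{\rm d}\bar z$.

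The one genuinely non-mechanical move is to recast that first term in the form stated in the lemma. I would apply $\partial_{\bar z}$ to the identity $\dot{f}_z\circ\gamma=\dot{f}_z+\dot\gamma'/\gamma'+(\gamma''/\gamma')\dot{f}$: on the left, $\gamma$ being holomorphic, the chain rule gives $\partial_{\bar z}(\dot{f}_z\circ\gamma)=(\dot{f}_{z\bar z}\circ\gamma)\,\overline{\gamma'}$; on the right, $\dot\gamma'/\gamma'$ and $\gamma''/\gamma'$ are holomorphic in $z$, so $\partial_{\bar z}$ of the right-hand side is $\dot{f}_{z\bar z}+(\gamma''/\gamma')\dot{f}_{\bar z}$. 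Hence $(\dot{f}_{z\bar z}\circ\gamma)\,\overline{\gamma'}-\dot{f}_{z\bar z}=(\gamma''/\gamma')\dot{f}_{\bar z}$, and substituting this turns the first term into $-2\big(\dot{f}_{z\bar z}\circ\gamma\,\overline{\gamma'}-\dot{f}_{z\bar z}\big)\,{\rm d}\bar z$, which completes the proof. I expect the main obstacle to be nothing more than bookkeeping — keeping the $\overline{\partial}$-derivatives straight under composition with the holomorphic $\gamma$, and spotting this last identity; no idea beyond the transformation rules \eqref{e:phi-behave}, \eqref{e:app2}, \eqref{e:app3} is needed.
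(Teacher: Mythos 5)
Your proposal is correct and follows essentially the same route as the paper: both expand $\delta\xi_{\gamma^{-1}}=\gamma^{*}\xi-\xi$ using $\phi\circ\gamma=\phi-\log|\gamma'|^{2}$, the cocycle rule for $\phi_z$, and the Beltrami transformation of $\dot f_{\bar z}$, and then convert the leftover $-2(\gamma''/\gamma')\dot f_{\bar z}\,{\rm d}\bar z$ via the identity $\dot f_{z\bar z}\circ\gamma\,\overline{\gamma'}-\dot f_{z\bar z}=(\gamma''/\gamma')\dot f_{\bar z}$, which is exactly the paper's relation \eqref{e:dot-f-app} obtained by differentiating \eqref{e:app3}.
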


\begin{proof}
From the equality \eqref{e:app3},
\begin{gather}\label{e:dot-f-app}
\dot{f}_{z\bar{z}} \circ \gamma \gamma' \overline{(\gamma')} = \gamma''\dot{f}_{\bar{z}} +\gamma' \dot{f}_{z\bar{z}}.
\end{gather}
Then, from $\xi= 2\phi_z \dot{f}_{\bar{z}} {\rm d}\bar{z} -\phi \,{\rm d}\dot{f}_z$, using \eqref{e:dot-f-app} we have
\begin{align*}
\delta\xi_{\gamma^{-1}} &= \big(2\phi_z \dot{f}_{\bar{z}}\big) \circ \gamma \overline{\gamma'} {\rm d}\bar{z} -\big(\phi \,{\rm d}\dot{f}_z\big)\circ\gamma -2\phi_z\dot{f}_{\bar{z}}\, {\rm d}\bar{z} +\phi\, {\rm d}\dot{f}_z
\\[.5ex]
&=2\bigg( \phi_z -\frac{\gamma''}{\gamma'} \bigg)\dot{f}_{\bar{z}}\, {\rm d}\bar{z} -\big(\phi -\log|\gamma'|^2 \big) {\rm d}\dot{f}_z\circ\gamma - 2\phi_z\dot{f}_{\bar{z}}\, {\rm d}\bar{z} +\phi \,{\rm d}\dot{f}_z
\\[.5ex]
&=-2\big(\dot{f}_{z\bar{z}}\circ\gamma\overline{\gamma'}-\dot{f}_{z\bar{z}}\big){\rm d}\bar{z} -\phi \big( {\rm d}\dot{f}_{z}\circ\gamma - {\rm d}\dot{f}_z \big) + \log|\gamma'|^2 {\rm d}\dot{f}_z\circ\gamma.
\end{align*}
This completes the proof since $\big({\rm d}\dot{f}_z\big)\circ\gamma = {\rm d}\big( \dot{f}_z\circ\gamma\big)$.
\end{proof}

Now, for the term $L_{\mu}\check{\theta}$, we have

\begin{Lemma}\label{l:l4}
\begin{gather*}
L_{\mu}\check{\theta}_{\gamma^{-1}}
=\bigg(\phi-\frac{1}{2}\log|\gamma'|^2-2\log2-\log|c(\gamma)|^2\bigg){\rm d}
\big(\dot{f}_z\circ\gamma-\dot{f}_z\big)
\\ \hphantom{L_{\mu}\check{\theta}_{\gamma^{-1}}=}
{}-2\bigg(\dot{f}_{zz}\circ\gamma\gamma'-\dot{f}_{zz}-\lambda\frac{\gamma''}{\gamma'} \bigg){\rm d}z
+\bigg( \frac{1}{2}\big(\dot{f}_z\circ\gamma+\dot{f}_z\big)+\frac{\dot{c}(\gamma)}{c(\gamma)}-\lambda \bigg){\rm d}\log|\gamma'|^2.
\end{gather*}
\end{Lemma}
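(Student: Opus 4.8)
The plan is to compute $L_\mu\check\theta_{\gamma^{-1}}$ directly from the definition of the Lie derivative, in the style of the proof of Lemma~\ref{l:1}. Write the cochain as a product $\check\theta_{\gamma^{-1}}[\phi]=A_{\gamma^{-1}}[\phi]\,\eta_{\gamma^{-1}}$ of the scalar $A_{\gamma^{-1}}[\phi]:=\phi-\frac12\log|\gamma'|^2-2\log2-\log|c(\gamma)|^2$ and the $1$-form $\eta_{\gamma^{-1}}:=\frac{\gamma''}{\gamma'}{\rm d}z-\frac{\ov{\gamma''}}{\ov{\gamma'}}{\rm d}\bar z$. Passing to the $\vep$-family $A_{(\gamma^{\vep\mu})^{-1}}[\phi^{\vep\mu}]\,\eta_{(\gamma^{\vep\mu})^{-1}}$ (a $1$-form on $f^{\vep\mu}(\Omega)$), pulling it back by $f^{\vep\mu}$, and differentiating at $\vep=0$, the Leibniz rule splits $L_\mu\check\theta_{\gamma^{-1}}$ into the variation of the scalar factor times $\eta_{\gamma^{-1}}$, plus $A_{\gamma^{-1}}[\phi]$ times the variation of the pulled-back $1$-form $(f^{\vep\mu})^*\eta_{(\gamma^{\vep\mu})^{-1}}$.

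First I would record the ingredients. From $f^{\vep\mu}_{\bar z}=\vep\mu f^{\vep\mu}_z$ one has $\dot f_{\bar z}=\mu$. Since $f^{\vep\mu}$, $(\gamma^{\vep\mu})'$ and $c(\gamma^{\vep\mu})$ depend holomorphically on $\vep$, their conjugates have vanishing $\pa_\vep$-derivative at $\vep=0$; in particular $\pa_\vep\log|c(\gamma^{\vep\mu})|^2\big|_{\vep=0}=\frac{\dot c(\gamma)}{c(\gamma)}$ with $\dot c(\gamma):=\pa_\vep c(\gamma^{\vep\mu})|_{\vep=0}$ (no conjugate term), and likewise $\pa_\vep\log|(\gamma^{\vep\mu})'\circ f^{\vep\mu}|^2\big|_{\vep=0}$ reduces, after applying the $z$-derivative of~\eqref{e:app3}, to $\dot f_z\circ\gamma-\dot f_z$. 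I would also use the cocycle relations~\eqref{e:app1} and~\eqref{e:app2}; the relation~\eqref{e:app3} together with its first and second $z$-derivatives, to rewrite the variations of $(\gamma^{\vep\mu})'$ and $\frac{(\gamma^{\vep\mu})''}{(\gamma^{\vep\mu})'}$ through $\dot f_z$, $\dot f_{zz}$ and $\dot c(\gamma)$, using also that $\gamma$ has vanishing Schwarzian (so $\gamma'''/\gamma'=\frac32(\gamma''/\gamma')^2$); the identity~\eqref{e:dot-f-app}, to rewrite $(\dot f_{z\bar z}\circ\gamma)\ov{\gamma'}-\dot f_{z\bar z}$ as $\frac{\gamma''}{\gamma'}\mu$ in the ${\rm d}\bar z$-component; and the combination $\lambda=\dot\phi+\phi_z\dot f+\dot f_z$, which --- exactly as in Lemma~\ref{l:1} --- is the natural grouping that emerges once $\pa_\vep(\phi^{\vep\mu}\circ f^{\vep\mu})|_{\vep=0}=\dot\phi+\phi_z\dot f$ is combined with the $\dot f_z$ coming from $\pa_\vep\,{\rm d}f^{\vep\mu}|_{\vep=0}={\rm d}\dot f$.

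Carrying out the differentiation and substituting these relations, the terms reorganize into the three groups in the statement. The first-order ``logarithmic-derivative of $\dot f$'' pieces, together with the ${\rm d}\bar z$-contribution $A_{\gamma^{-1}}[\phi]\frac{\gamma''}{\gamma'}\mu\,{\rm d}\bar z$ produced by $\pa_\vep f^{\vep\mu}_{\bar z}|_{\vep=0}=\mu$ and then rewritten via~\eqref{e:dot-f-app}, collapse into $A_{\gamma^{-1}}[\phi]\,{\rm d}(\dot f_z\circ\gamma-\dot f_z)$. The variation of $\gamma''/\gamma'$, after the cancellations forced by the M\"obius (zero-Schwarzian) structure and~\eqref{e:app3}, produces the second-order ${\rm d}z$-term $-2\big(\dot f_{zz}\circ\gamma\,\gamma'-\dot f_{zz}-\lambda\frac{\gamma''}{\gamma'}\big){\rm d}z$. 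Finally, the leftover pieces, coming from the variations of $\log|\gamma'|^2$ and $\log|c(\gamma)|^2$ together with the residual $\lambda$-term, assemble --- using ${\rm d}\log|\gamma'|^2=\frac{\gamma''}{\gamma'}{\rm d}z+\frac{\ov{\gamma''}}{\ov{\gamma'}}{\rm d}\bar z$ --- into $\big(\frac12(\dot f_z\circ\gamma+\dot f_z)+\frac{\dot c(\gamma)}{c(\gamma)}-\lambda\big){\rm d}\log|\gamma'|^2$. Adding the three groups gives the claimed formula.

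The obstacle is purely organizational: the raw $\pa_\vep$-derivative produces a dozen-odd terms in $\dot\phi,\phi_z,\phi_{zz},\dot f,\dot f_z,\dot f_{zz},\dot\gamma,\dot\gamma',\dot\gamma''$ and $\dot c(\gamma)$, and the content of the lemma is precisely that all of them collapse into the three displayed groups. The genuinely new ingredient relative to the computation of $L_\mu\check\omega$ in Lemma~\ref{l:1} is the explicit $\log|c(\gamma)|^2$ present in $\check\theta$: one has to identify correctly how $\dot c(\gamma)$ enters and how it combines with the second-order terms arising from the variation of $\gamma''/\gamma'$, which is where the vanishing of the Schwarzian of $\gamma$ and the precise relation between $\dot\gamma$ and $\dot f$ are used. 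I would carry $\gamma''/\gamma'$ and $\gamma'''/\gamma'$ unexpanded, keep $A_{\gamma^{-1}}[\phi]$ and ${\rm d}\log|\gamma'|^2$ as recognizable blocks throughout, and double-check the bookkeeping of the conjugate-in-$\vep$ factors so that the ${\rm d}\bar z$-components come out consistent with the statement.
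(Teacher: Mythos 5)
Your proposal takes essentially the same route as the paper's proof: split $\check{\theta}_{\gamma^{-1}}=A_{\gamma^{-1}}[\phi]\,\eta_{\gamma^{-1}}$, apply the Leibniz rule to the pullback by $f^{\vep\mu}$, and use the variations of $\log\big|(\gamma^{\vep\mu})'\circ f^{\vep\mu}\big|^2$, of $\frac{(\gamma^{\vep\mu})''}{(\gamma^{\vep\mu})'}\circ f^{\vep\mu}$ and of $c(\gamma^{\vep\mu})$ (the paper encodes your zero-Schwarzian observation via $-2c(\gamma)=\gamma''/(\gamma')^{3/2}$, which yields \eqref{e:cc}), together with \eqref{e:dot-f-app} for the ${\rm d}\bar{z}$-component and the grouping by $\lambda$, exactly as in the paper. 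The plan and its ingredients are correct and match the paper's argument.
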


\begin{proof}
Recall
\begin{gather*} 
\check{\theta}_{\gamma^{-1}}[\phi]
= \bigg(\phi-\frac{1}{2}\log|\gamma'|^2-2\log2-\log|c(\gamma)|^2 \bigg)
 \bigg(\frac{\gamma''}{\gamma'} {\rm d}z -\frac{\ov{\gamma''}}{\ov{\gamma'}}{\rm d}\bar{z}\bigg).
\end{gather*}
For this, we observe the following equalities:
\begin{gather*}
\frac{\partial}{\partial\vep}\bigg|_{\vep=0}\big(\log\big|(\gamma^{\vep\mu})'\circ f^{\vep\mu}\big|^2 \big)= \frac{\dot{\gamma}'}{\gamma'}+\frac{\gamma''}{\gamma'}\dot{f} = \dot{f}_z\circ \gamma -\dot{f}_z,
\\[.5ex]
\frac{\partial}{\partial\vep}\bigg|_{\vep=0}
\bigg(\frac{(\gamma^{\vep\mu})''}{(\gamma^{\vep\mu})'}\circ f^{\vep\mu} \bigg)
=\dot{f}_{zz}\circ \gamma \gamma' -\dot{f}_{zz} -\frac{\gamma''}{\gamma'}\dot{f}_z.
\end{gather*}
For $c(\gamma)$, we also have
\begin{gather*}
-2c(\gamma)=\frac{\gamma''(z)}{(\gamma'(z))^{\frac{3}{2}}}.
\end{gather*}
Then
\begin{gather}
\frac{\dot{c}(\gamma)}{c(\gamma)}=\frac{\dot{f}_{zz}\circ\gamma\gamma'-\dot{f}_{zz}
-\frac{\gamma''}{\gamma'}\dot{f}_z}{\frac{\gamma''}{\gamma'}}-\frac{1}{2}
\big(\dot{f}_z\circ\gamma-\dot{f}_z\big)
=\frac{\dot{f}_{zz}\circ\gamma\gamma'-\dot{f}_{zz}}{\frac{\gamma''}{\gamma'}}-\frac{1}{2}
\big(\dot{f}_z\circ\gamma+\dot{f}_z\big).\label{e:cc}
\end{gather}
Hence,
\begin{gather*}
L_{\mu}\bigg(\phi-\frac{1}{2}\log|\gamma'|^2-2\log2-\log|c(\gamma)|^2\bigg)
\\ \qquad
{}=\lambda -\dot{f}_z-\frac{1}{2}\big(\dot{f}_z\circ\gamma-\dot{f}_z\big)
-\frac{\dot{c}(\gamma)}{c(\gamma)}
=\lambda -\frac{1}{2}\big(\dot{f}_z\circ\gamma+\dot{f}_z\big)-\frac{\dot{c}(\gamma)}{c(\gamma)}
=\lambda-\frac{\dot{f}_{zz}\circ\gamma\gamma'\!-\dot{f}_{zz}}{\frac{\gamma''}{\gamma'}}.
\end{gather*}
Moreover,
\begin{gather*}
L_{\mu}\bigg(\frac{\gamma''}{\gamma'}{\rm d}z-\frac{\overline{\gamma''}}{\overline{\gamma'}}{\rm d}\bar{z}\bigg)=L_{\mu}{\rm d}\log|\gamma'|^2={\rm d}\big(\dot{f}_z\circ\gamma-\dot{f}_z\big).
\end{gather*}
Hence,
\begin{align*}
L_{\mu}\check{\theta}_{\gamma^{-1}}
={}&\bigg(\phi-\frac{1}{2}\log|\gamma'|^2-2\log2-\log|c(\gamma)|^2\bigg)
{\rm d}\big(\dot{f}_z\circ\gamma-\dot{f}_z\big)
\\
&-\bigg( \frac{1}{2}\big(\dot{f}_z\circ\gamma+\dot{f}_z\big)
+\frac{\dot{c}(\gamma)}{c(\gamma)}-\lambda \bigg)\bigg(2\frac{\gamma''}{\gamma'}{\rm d}z-{\rm d}\log|\gamma'|^2\bigg)
\\
={}&\bigg(\phi-\frac{1}{2}\log|\gamma'|^2-2\log2-\log|c(\gamma)|^2\bigg)
{\rm d}\big(\dot{f}_z\circ\gamma-\dot{f}_z\big)
\\
&-2\bigg(\dot{f}_{zz}\circ\gamma\gamma'-\dot{f}_{zz}-\lambda\frac{\gamma''}{\gamma'} \bigg){\rm d}z
+\bigg( \frac{1}{2}\big(\dot{f}_z\circ\gamma+\dot{f}_z\big)
+\frac{\dot{c}(\gamma)}{c(\gamma)}-\lambda \bigg){\rm d}\log|\gamma'|^2.
\end{align*}
This completes the proof.
\end{proof}

\begin{Proposition}\label{p:exact-l} For $\chi_{\gamma^{-1}}$, there is an exact form $l_{\gamma^{-1}}$ such that $\chi_{\gamma^{-1}}={\rm d}l_{\gamma^{-1}}$, where
\begin{gather*}
l_{\gamma^{-1}}=\frac{1}{2}\log|\gamma'|^2\bigg(\dot{f}_z\circ\gamma +\dot{f}_z+2\frac{\dot{c}(\gamma)}{c(\gamma)}\bigg)
-\big(\log|c(\gamma)|^2+2+2\log 2\big)\big(\dot{f}_z\circ\gamma-\dot{f}_z\big).
\end{gather*}
\end{Proposition}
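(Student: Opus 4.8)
The plan is to add up the three contributions that define $\chi_{\gamma^{-1}}$ in \eqref{e:def-chi} using the formulas already at hand, let a chain of cancellations reduce the result to a manageable $1$-form, and then check directly that this $1$-form equals ${\rm d}l_{\gamma^{-1}}$. The only ingredient not yet computed is $\delta(\phi_z\lambda\,{\rm d}z-\phi_{\bar z}\lambda\,{\rm d}\bar z)_{\gamma^{-1}}$. Since $\lambda$ is $\Gamma$-invariant (established in the course of showing ${\rm d}\chi=0$) and $\phi_z\circ\gamma\,\gamma'+\gamma''/\gamma'=\phi_z$ by \eqref{e:app2}, pulling back the $1$-form by $\gamma$ and subtracting gives $\delta(\phi_z\lambda\,{\rm d}z-\phi_{\bar z}\lambda\,{\rm d}\bar z)_{\gamma^{-1}}=-\lambda\big(\frac{\gamma''}{\gamma'}{\rm d}z-\frac{\overline{\gamma''}}{\overline{\gamma'}}{\rm d}\bar z\big)$.

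Next I would substitute this together with Lemmas~\ref{l:l3} and~\ref{l:l4} into \eqref{e:def-chi}. Two structural cancellations do most of the work. First, the term $-\phi\,{\rm d}(\dot f_z\circ\gamma-\dot f_z)$ in $\delta\xi_{\gamma^{-1}}$ cancels the $\phi$-part of the first term of $L_\mu\check\theta_{\gamma^{-1}}$, so $\chi_{\gamma^{-1}}$ does not involve $\phi$ at all, matching the claimed $l_{\gamma^{-1}}$. Second, every $\lambda$-term cancels: the $-\lambda(\gamma''/\gamma')\,{\rm d}z$ from the expression above, the $+2\lambda(\gamma''/\gamma')\,{\rm d}z$ contained in the $-2(\dot f_{zz}\circ\gamma\,\gamma'-\dot f_{zz}-\lambda\gamma''/\gamma')\,{\rm d}z$ term of Lemma~\ref{l:l4}, and the $-\lambda\,{\rm d}\log|\gamma'|^2$ from the last term of Lemma~\ref{l:l4} add to zero, and likewise in the ${\rm d}\bar z$ direction. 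What survives is $-2(\dot f_{zz}\circ\gamma\,\gamma'-\dot f_{zz})\,{\rm d}z-2(\dot f_{z\bar z}\circ\gamma\,\overline{\gamma'}-\dot f_{z\bar z})\,{\rm d}\bar z$, which by the chain rule ${\rm d}(\dot f_z\circ\gamma)=(\dot f_{zz}\circ\gamma)\gamma'\,{\rm d}z+(\dot f_{z\bar z}\circ\gamma)\overline{\gamma'}\,{\rm d}\bar z$ equals $-2\,{\rm d}(\dot f_z\circ\gamma-\dot f_z)$, together with $\log|\gamma'|^2\,{\rm d}(\dot f_z\circ\gamma)$, the term $\big({-}\tfrac12\log|\gamma'|^2-2\log2-\log|c(\gamma)|^2\big)\,{\rm d}(\dot f_z\circ\gamma-\dot f_z)$, and $\big(\tfrac12(\dot f_z\circ\gamma+\dot f_z)+\dot c(\gamma)/c(\gamma)\big)\,{\rm d}\log|\gamma'|^2$.

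Finally I would expand ${\rm d}l_{\gamma^{-1}}$ by the Leibniz rule. The key point is that $c(\gamma)$ and $\dot c(\gamma)$ are constants in $z$ (from $-2c(\gamma)=\gamma''(z)/(\gamma'(z))^{3/2}$), so ${\rm d}$ annihilates $\dot c(\gamma)/c(\gamma)$ and $\log|c(\gamma)|^2+2+2\log2$; thus ${\rm d}l_{\gamma^{-1}}$ is the sum of $\big(\tfrac12(\dot f_z\circ\gamma+\dot f_z)+\dot c(\gamma)/c(\gamma)\big)\,{\rm d}\log|\gamma'|^2$, $\tfrac12\log|\gamma'|^2\,{\rm d}(\dot f_z\circ\gamma+\dot f_z)$, and $-(\log|c(\gamma)|^2+2+2\log2)\,{\rm d}(\dot f_z\circ\gamma-\dot f_z)$. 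Comparing termwise with the expression for $\chi_{\gamma^{-1}}$ above, the only non-immediate identification is $\log|\gamma'|^2\,{\rm d}(\dot f_z\circ\gamma)-\tfrac12\log|\gamma'|^2\,{\rm d}(\dot f_z\circ\gamma-\dot f_z)=\tfrac12\log|\gamma'|^2\,{\rm d}(\dot f_z\circ\gamma+\dot f_z)$, after which the two $1$-forms agree. The whole computation is elementary; the main obstacle is purely the bookkeeping of the $\phi$- and $\lambda$-cancellations and keeping the second-derivative terms grouped into exact forms. Note that exactness of $\chi_{\gamma^{-1}}$ is already known; the real content of the proposition is identifying the explicit (globally defined) primitive $l_{\gamma^{-1}}$.
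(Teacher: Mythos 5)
Your proposal is correct and follows essentially the same route as the paper: compute $\delta(\phi_z\lambda\,{\rm d}z-\phi_{\bar z}\lambda\,{\rm d}\bar z)_{\gamma^{-1}}=-\lambda\bigl(\frac{\gamma''}{\gamma'}{\rm d}z-\frac{\overline{\gamma''}}{\overline{\gamma'}}{\rm d}\bar z\bigr)$ from the $\Gamma$-invariance of $\lambda$ and \eqref{e:app2}, add Lemmas~\ref{l:l3} and~\ref{l:l4}, cancel the $\phi$- and $\lambda$-terms, and match the remainder with ${\rm d}l_{\gamma^{-1}}$. The only (harmless) difference is in the last step, where you group the second-derivative terms into ${\rm d}(\dot f_z\circ\gamma-\dot f_z)$ via the chain rule and expand ${\rm d}l_{\gamma^{-1}}$ by Leibniz using that $c(\gamma)$, $\dot c(\gamma)$ are $z$-independent, whereas the paper performs one more rewriting via \eqref{e:dot-f-app} and \eqref{e:cc}; both amount to the same verification.
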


\begin{proof}
Recall
\begin{gather*}
{\chi}= \delta\xi + \delta(\phi_z \lambda \,{\rm d}z - \phi_{\bar{z}} \lambda\, {\rm d}\bar{z}) + L_{\mu} \check{\theta}.
\end{gather*}
Then, by Lemmas \ref{l:l3} and \ref{l:l4}, we have
\begin{align*}
\chi_{\gamma^{-1}}={}& -2\big(\dot{f}_{z\bar{z}}\circ\gamma\overline{\gamma'}-\dot{f}_{z\bar{z}}\big){\rm d}\bar{z}-\phi {\rm d}\big(\dot{f}_z\circ\gamma-\dot{f}_z\big)+\log|\gamma'|^2 {\rm d}\big(\dot{f}_z\circ\gamma\big)
\\
&+\bigg(\phi-\frac{1}{2}\log|\gamma'|^2-2\log2-\log|c(\gamma)|^2\bigg){\rm d}\big(\dot{f}_z\circ\gamma-\dot{f}_z\big)
\\
&-2\bigg(\dot{f}_{zz}\circ\gamma\gamma'-\dot{f}_{zz}-\lambda\frac{\gamma''}{\gamma'} \bigg){\rm d}z
+\bigg( \frac{1}{2}\big(\dot{f}_z\circ\gamma+\dot{f}_z\big)+\frac{\dot{c}(\gamma)}{c(\gamma)}-\lambda \bigg){\rm d}\log|\gamma'|^2
\\
&-\lambda \bigg(\frac{\gamma''}{\gamma'} {\rm d}z - \frac{\overline{\gamma''}}{\overline{\gamma'}}{\rm d}\bar{z}\bigg).
\end{align*}
On the right hand side of the above equality, the terms involving $\lambda$ cancel each other and the terms
involving $\phi$ also cancel each other. Now let us rewrite $\chi_{\gamma^{-1}}$ changing the order of terms as follows.
\begin{align*}
\chi_{\gamma^{-1}}={}&\frac12 {\rm d}\log|\gamma'|^2 \big(\dot{f}_z\circ \gamma +\dot{f}_z\big)
+\frac12 \log|\gamma'|^2 {\rm d}\big(\dot{f}_z\circ \gamma +\dot{f}_z\big)
\\
&-\big(\log|c(\gamma)|^2+2\log 2\big)\, {\rm d} \big(\dot{f}_z\circ \gamma - \dot{f}_z\big)
\\
&-2\big(\dot{f}_{z\bar{z}}\circ\gamma\overline{\gamma'}-\dot{f}_{z\bar{z}}\big){\rm d}\bar{z}
-2\big(\dot{f}_{zz}\circ\gamma\gamma'-\dot{f}_{zz} {\rm d}\big)z
+\frac{\dot{c}(\gamma)}{c(\gamma)} {\rm d}\log|\gamma'|^2
\\
={}&\frac12 {\rm d}\log|\gamma'|^2 \big(\dot{f}_z\circ \gamma +\dot{f}_z\big)
+\frac12 \log|\gamma'|^2 {\rm d}\big(\dot{f}_z\circ \gamma +\dot{f}_z\big)
\\
&-\big(\log|c(\gamma)|^2+2\log 2\big) \,{\rm d} \big(\dot{f}_z\circ \gamma - \dot{f}_z\big)
\\
&-2\dot{f}_{\bar{z}}\frac{\gamma''}{\gamma'} {\rm d}\bar{z}
-\big(\dot{f}_z\circ \gamma+\dot{f}_z\big)\frac{\gamma''}{\gamma'} {\rm d}z -\frac{\dot{c}(\gamma)}{c(\gamma)} \bigg(\frac{\gamma''}{\gamma'} {\rm d}z - \frac{\overline{\gamma''}}{\overline{\gamma'}} {\rm d}\bar{z}\bigg),
\end{align*}
where we used the equalities \eqref{e:dot-f-app} and \eqref{e:cc}.
Finally we can check that the exact form $l_{\gamma^{-1}}$ satisfying $\chi_{\gamma^{-1}}= {\rm d}l_{\gamma^{-1}}$
is given by
\begin{gather*}
l_{\gamma^{-1}}=\frac{1}{2}\log|\gamma'|^2\bigg(\dot{f}_z\circ\gamma +\dot{f}_z+2\frac{\dot{c}(\gamma)}{c(\gamma)}\bigg)
-\big(\log|c(\gamma)|^2+2+2\log 2\big)\big(\dot{f}_z\circ\gamma-\dot{f}_z\big).
\end{gather*}
For this, we use the following equality
\begin{gather*}
\dot{\gamma}' +\gamma'' \dot{f} = \dot{f}_z\circ \gamma \gamma' -\gamma' \dot{f}_z,
\end{gather*}
which follows from $\gamma^{\vep\mu}\circ f^{\vep\mu}= f^{\vep\mu} \circ \gamma$.
\end{proof}

By Proposition \ref{p:exact-l}, we have
\begin{gather}\label{e:del-xi}
\big\langle \delta\xi+\delta(\phi_z \lambda \,{\rm d}z - \phi_{\bar{z}} \lambda\, {\rm d}\bar{z}) +L_{\mu}\check{\theta}, L_1-L_2 \rangle =\langle {\rm d}l, L_1-L_2\big\rangle
=\langle l, \pa'L_1- \pa'L_2 \rangle.
\end{gather}
Since
\begin{gather*}
L_{\mu}\check{u}=L_{\mu}\delta \check{\theta}=\delta L_{\mu}\check{\theta}
=\delta \chi=\delta {\rm d}l={\rm d}\delta l,
\end{gather*}
we have
\begin{gather}\label{e:L-check-u}
\langle L_{\mu}\check{u}, W_1-W_2\rangle
=\langle\delta l, \pa'W_1-\pa'W_2\rangle
=\langle\delta l, V_1-V_2 \rangle
=\langle l, \pa''V_1-\pa''V_2 \rangle.
\end{gather}
From \eqref{e:der-var1}, \eqref{e:del-xi}, and \eqref{e:L-check-u}, it follows that
\begin{align*}
L_{\mu}\check{S}[\phi]
={}&\frac{\rm i}{2}\bigl(\big\langle \big(\big(2\phi_{zz}-\phi^2_z\big)\mu -2\phi_{z\bar{z}} \lambda\big)\, {\rm d}z\wedge {\rm d}\bar{z}, F_1-F_2 \big\rangle
\\
& -\langle \delta(\phi_z \lambda \,{\rm d}z - \phi_{\bar{z}} \lambda\, {\rm d}\bar{z} - \xi), L_1-L_2 \rangle -\langle L_{\mu}\check{\theta}, L_1-L_2 \rangle+\langle L_{\mu}\check{u}, W_1-W_2 \rangle\bigr)
\\
={}& \frac{\rm i}{2}\bigl(\big\langle \big(\big(2\phi_{zz}-\phi^2_z\big)\mu -2\phi_{z\bar{z}} \lambda \big)\, {\rm d}z\wedge {\rm d}\bar{z}, F_1-F_2 \big\rangle-\langle l, \pa'L_1-\pa'L_2-\pa''V_1+\pa''V_2 \rangle\bigr)
\\
={}&\frac{\rm i}{2}\big\langle \big(\big(2\phi_{zz}-\phi^2_z\big)\mu -2\phi_{z\bar{z}} \lambda \big)\, {\rm d}z\wedge {\rm d}\bar{z}, F_1-F_2 \big\rangle.
\end{align*}
This completes the proof of Theorem \ref{t:firstvariation}.

\begin{Theorem}
For a smooth family of conformal metrics ${\rm e}^{\phi^{\vep\mu}(z^\vep)}|{\rm d}z^\vep|^2$ on $X^\vep$,
\begin{gather*}
L_{\mu} {S}[\phi]=\int_{\Gamma\backslash \Omega} \big(\big(2\phi_{zz}-\phi^2_z\big)\mu +(1+K_\phi) {\rm e}^{\phi} \lambda \big)\, {\rm d}^2z,
\end{gather*}
where $\lambda=\dot{\phi}+\phi_z \dot{f} +\dot{f}_z$ with $\dot{\phi}=\frac{\rm d}{{\rm d}\vep}|_{\vep=0} \phi^{\vep\mu}$ and $K_\phi=-2\phi_{z\bar{z}} {\rm e}^{-\phi}$.
\end{Theorem}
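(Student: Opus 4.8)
The plan is to exploit the decomposition $S[\phi]=\check{S}[\phi]+\int_{\Gamma\backslash\Omega}{\rm e}^{\phi}\,{\rm d}^2z$ recorded at the beginning of this section: I would apply Theorem~\ref{t:firstvariation} verbatim to the first summand, so that the only genuinely new computation is the variation of the area term $\int_{\Gamma\backslash\Omega}{\rm e}^{\phi}\,{\rm d}^2z$.

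For that term I would pull the integral back to the fixed surface $X=\Gamma\backslash\Omega$ along the quasi-conformal map $f^{\vep\mu}$, which intertwines $\Gamma$ and $\Gamma^{\vep\mu}$ and carries the fundamental domain $F$ to $f^{\vep\mu}(F)$. Since the real Jacobian of $f^{\vep\mu}$ equals $|f^{\vep\mu}_z|^2-|f^{\vep\mu}_{\bar z}|^2$, one has $\int_{X^\vep}{\rm e}^{\phi^{\vep\mu}(z^\vep)}\,{\rm d}^2z^\vep=\int_{X}{\rm e}^{\phi^{\vep\mu}\circ f^{\vep\mu}}\big(|f^{\vep\mu}_z|^2-|f^{\vep\mu}_{\bar z}|^2\big)\,{\rm d}^2z$, and now the domain of integration no longer depends on $\vep$, so $L_\mu$ is obtained by differentiating the integrand at $\vep=0$. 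Here the key simplification comes from the holomorphic dependence of $f^{\vep\mu}$ on $\vep$ (so that $\frac{\pa}{\pa\bar\vep}\big|_{\vep=0}f^{\vep\mu}=0$, hence $\dot{\bar f}=0$ and $\frac{\pa}{\pa\vep}\big|_{\vep=0}\overline{f^{\vep\mu}_z}=\frac{\pa}{\pa\vep}\big|_{\vep=0}\overline{f^{\vep\mu}_{\bar z}}=0$): together with $f^{0}=\mathrm{id}$ and $\dot f_{\bar z}=\mu$ this gives $\frac{\pa}{\pa\vep}\big|_{\vep=0}\big(|f^{\vep\mu}_z|^2-|f^{\vep\mu}_{\bar z}|^2\big)=\dot f_z$, and by the chain rule $\frac{\pa}{\pa\vep}\big|_{\vep=0}\big(\phi^{\vep\mu}\circ f^{\vep\mu}\big)=\dot\phi+\phi_z\dot f$. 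The product rule then yields
\begin{gather*}
\frac{\pa}{\pa\vep}\bigg|_{\vep=0}\Big({\rm e}^{\phi^{\vep\mu}\circ f^{\vep\mu}}\big(|f^{\vep\mu}_z|^2-|f^{\vep\mu}_{\bar z}|^2\big)\Big)={\rm e}^{\phi}\big(\dot\phi+\phi_z\dot f+\dot f_z\big)={\rm e}^{\phi}\lambda ,
\end{gather*}
so $L_\mu\int_{\Gamma\backslash\Omega}{\rm e}^{\phi}\,{\rm d}^2z=\int_{\Gamma\backslash\Omega}{\rm e}^{\phi}\lambda\,{\rm d}^2z$.

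Adding this to Theorem~\ref{t:firstvariation} gives $L_\mu S[\phi]=\int_{\Gamma\backslash\Omega}\big((2\phi_{zz}-\phi_z^2)\mu-2\phi_{z\bar z}\lambda+{\rm e}^{\phi}\lambda\big)\,{\rm d}^2z$, and it only remains to rewrite the coefficient of $\lambda$: by the definition $K_\phi=-2\phi_{z\bar z}{\rm e}^{-\phi}$ from~\eqref{e:Gauss-cur} one has ${\rm e}^{\phi}-2\phi_{z\bar z}=(1+K_\phi){\rm e}^{\phi}$, which is exactly the asserted formula.

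The only delicate point is the bookkeeping of holomorphic versus anti-holomorphic $\vep$-derivatives in the Jacobian: a careless computation would produce $\dot f_z+\overline{\dot f_z}$ for the variation of $|f^{\vep\mu}_z|^2$, and it is precisely the holomorphic $\vep$-dependence of $f^{\vep\mu}$ that kills the conjugate term, so that the Jacobian variation $\dot f_z$ combines with the conformal-factor variation $\dot\phi+\phi_z\dot f$ to assemble exactly $\lambda=\dot\phi+\phi_z\dot f+\dot f_z$. Everything else is the product rule together with the substitution $-2\phi_{z\bar z}=K_\phi{\rm e}^{\phi}$.
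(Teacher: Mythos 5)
Your proposal is correct and takes essentially the same route as the paper: decompose $S[\phi]=\check S[\phi]+\int_{\Gamma\backslash\Omega}{\rm e}^{\phi}\,{\rm d}^2z$, apply Theorem~\ref{t:firstvariation} to $\check S[\phi]$, and show the variation of the area term is $\int_{\Gamma\backslash\Omega}{\rm e}^{\phi}\lambda\,{\rm d}^2z$ before substituting $-2\phi_{z\bar z}=K_\phi{\rm e}^{\phi}$. The only difference is that the paper records the identity $\frac{\partial}{\partial\vep}\big|_{\vep=0}\big({\rm e}^{\phi^{\vep\mu}\circ f^{\vep\mu}}\frac{\rm i}{2}\,{\rm d}f^{\vep\mu}\wedge {\rm d}\bar f^{\vep\mu}\big)={\rm e}^{\phi}\lambda\,{\rm d}^2z$ as immediate, whereas you spell out its justification via the Jacobian $|f^{\vep\mu}_z|^2-|f^{\vep\mu}_{\bar z}|^2$ and the holomorphic $\vep$-dependence, which is a correct elaboration of the same step.
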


\begin{proof}
We proved the formula for $L_{\mu} \check{S}$ in Theorem \ref{t:firstvariation}. For the remaining part,
it is easy to~see
\begin{gather*}
\frac{\partial}{\partial\vep}\bigg|_{\vep=0} \bigg( {\rm e}^{\phi^{\vep\mu}\circ f^{\vep\mu}} \frac{\rm i}{2} {\rm d}f^{\vep\mu}\wedge {\rm d}\bar{f}^{\vep\mu} \bigg) = {\rm e}^\phi \lambda\, {\rm d}^2z.
\end{gather*}
Hence,
\begin{align*}
L_\mu S[\phi] &=\int_{\Gamma\backslash \Omega} \big( \big(2\phi_{zz}-\phi^2_z\big)\mu -2\phi_{z\bar{z}} \lambda \big)\, {\rm d}^2z + \int_{\Gamma\backslash \Omega} {\rm e}^\phi \lambda\, {\rm d}^2z
\\
&= \int_{\Gamma\backslash \Omega} \big(\big(2\phi_{zz}-\phi^2_z\big)\mu +\big(1 -2\phi_{z\bar{z}} {\rm e}^{-\phi} \big) {\rm e}^{\phi}\lambda \big)\, {\rm d}^2z.
\end{align*}
This completes the proof.
\end{proof}

\subsection*{Acknowledgements}
This work was partially supported by Samsung Science and Technology Foundation under Project Number SSTF-BA1701-02.
The author thank referees for their helpful comments and suggestions which improve the exposition of the paper.

\pdfbookmark[1]{References}{ref}
\LastPageEnding

\end{document}